 \def\beql#1#2\eeql{\begin{equation}\label{#1}#2\end{equation}}
\DeclareMathOperator{\ech}{ECH}
\DeclareMathOperator{\PC0}{UN0}
\DeclareMathOperator{\PVC}{UNH}
\DeclareMathOperator{\MKR}{MKR}
\DeclareMathOperator{\mad}{MAD}
\DeclareMathOperator{\mul}{MUL}
\DeclareMathOperator{\RRF}{RRF}
\DeclareMathOperator{\GL}{GL}
\DeclareMathOperator{\CC}{{\text{\sc ClearDown}}}
\DeclareMathOperator{\PCU}{{\text{\sc PreClearUp}}}
\DeclareMathOperator{\CU}{{\text{\sc ClearUp}}}
\DeclareMathOperator{\UR}{{\text{\sc UpdateRow}}}
\DeclareMathOperator{\URT}{{\text{\sc UpdateRowTrafo}}}
\DeclareMathOperator{\CRZ}{CRZ}
\DeclareMathOperator{\TT}{\mathcal{M}}
\DeclareMathOperator{\KK}{\mathcal{K}}
\DeclareMathOperator{\diag}{diag}
\DeclareMathOperator{\RA}{{\bf \rightarrow}}
\DeclareMathOperator{\DA}{{\bf \downarrow}}
\newtheorem{theorem}{Theorem}[section]
\newtheorem{remark}[theorem]{Remark}
\newtheorem{lemma}[theorem]{Lemma}
\newtheorem{proposition}[theorem]{Proposition}
\newtheorem{definition}[theorem]{Definition}
\newcommand{\F}{{\mathbb{F}}}
\newcommand{\R}{{\mathbb{R}}}
\newcommand{\A}{{\mbox{\bf{A}}}}
\newcommand{\B}{{\mbox{\bf{B}}}}
\newcommand{\C}{{\mbox{\bf{C}}}}
\newcommand{\D}{{\mbox{\bf{D}}}}
\newcommand{\E}{{\mbox{\bf{E}}}}
\newcommand{\K}{{\mbox{\bf{K}}}}
\newcommand{\M}{{\mbox{\bf{M}}}}
\newcommand{\X}{{\mbox{\bf{X}}}}
\renewcommand{\R}{{\mbox{\bf{R}}}}
\title{A parallel algorithm for Gaussian elimination over finite fields}
\author{Stephen Linton, Gabriele Nebe, Alice Niemeyer,  Richard
  Parker, Jon Thackray} 
\begin{document}
\maketitle

\textsc{
  School of Computer Science, University of St.\ Andrews, St.\ Andrews,
  Fife KY169SX, Scotland}
\emph{E-mail address}{:\;\;}\texttt{steve.linton@st-andrews.ac.uk}
\medskip 

\textsc{
Lehrstuhl D f\"ur Mathematik, RWTH Aachen University,
52056 Aachen, Germany}
\emph{E-mail address}{:\;\;}\texttt{nebe@math.rwth-aachen.de}

\medskip 
\textsc{
Lehrstuhl B f\"ur Mathematik, RWTH Aachen University,
52056 Aachen, Germany}
\emph{E-mail address}{:\;\;}\texttt{alice.niemeyer@mathb.rwth-aachen.de}
\medskip 

\textsc{70 York St. Cambridge CB1 2PY, UK}\\
\emph{E-mail address}{:\;\;}\texttt{richpark54@hotmail.co.uk}
\medskip 

\emph{E-mail address}{:\;\;}\texttt{jgt@pobox.com}

\begin{abstract}
In this  paper we describe  a parallel Gaussian  elimination algorithm
for  matrices  with  entries  in   a  finite  field.  Unlike  previous
approaches, our  algorithm subdivides a  very large input  matrix into
smaller submatrices by subdividing both  rows and columns into roughly
square  blocks  sized so  that  computing  with individual  blocks  on
individual  processors provides  adequate concurrency.   The algorithm
also  returns  the  transformation   matrix,  which  encodes  the  row
operations  used.   We  go  to  some  lengths  to  avoid  storing  any
unnecessary data as we keep track of the row operations, such as block
columns of the transformation matrix known to be zero.

The algorithm  is accompanied  by a  concurrency analysis  which shows
that the improvement in concurrency is  of the same order of magnitude
as the number  of blocks. An implementation of the  algorithm has been
tested on matrices as large as $1  000 000\times 1 000 000$ over small
finite fields.
\end{abstract}

{\bf Keywords:} Gaussian elimination, parallel algorithm, finite
fields




{\bf MSC 2010:}15A06, 68W10, 68W05

\section{Introduction}
\label{sec:intro}

Already  employed for  solving equations  by hand  in China  over 2000
years ago~\cite{History},  the Gaussian elimination method  has become
an invaluable tool in many  areas of science.  When computing machines
became  available,  this  algorithm  was   one  of  the  first  to  be
implemented on a computer.  In general  the algorithm takes as input a
matrix with  entries in a  field (or  a division ring)  and transforms
this matrix to  a matrix in row  echelon form. It can  be employed for
several  different  purposes,  and  computer  implementations  can  be
tailored to suit the intended  application.  Different variants of the
Gaussian elimination algorithm can be envisaged, for example computing
the rank of  a matrix, computing a  row echelon form or  a reduced row
echelon form  of a  matrix, or  computing one  of these  echelon forms
together with the transformation matrix.   Often one of these versions
of  the Gaussian  elimination algorithm  lies  at the  heart of  other
algorithms for  solving problems in a  broad range of areas  and their
overall  performance  is often  dictated  by  the performance  of  the
underlying Gaussian elimination algorithm.   Thus an implementation of
a Gaussian  elimination algorithm  is required to  display exceptional
performance.

Since their invention, computers have  become faster and more powerful
every year.  Yet, for over a decade this increase in computing power
is no longer primarily due to faster CPUs but rather to the number of different
processors an individual computer has, paired with the increasingly
sophisticated memory hierarchy. It is therefore paramount that modern
algorithms are tailored to modern computers. In particular this means
that they need to be able to perform computations in parallel and store
the data for the current computations readily in cache.

With the  advance of  parallel computers  comes the  need to  design a
parallel algorithm to perform Gaussian  elimination on a matrix.  Such
a parallel algorithm  would immediately result in  immense speedups of
higher  level  algorithms  calling the  Gaussian  elimination  without
having to introduce parallelism  to these algorithms themselves.  When
designing a parallel Gaussian elimination algorithm it is important to
keep the applications of the  algorithms in mind.  Several versions of
a  parallel Gaussian  elimination algorithm  have been  described when
working over  the field of  real or  complex numbers, see  
\cite{realgausssurvey} for a survey and 
PLASMA  \cite{plasma} for  implementations.
In this paper we describe a parallel version of the
Gaussian elimination algorithm which, given a matrix with entries in a
finite field,  computes a reduced  row echelon form together  with the
transformation matrix. We  note that when working  with dense matrices
over finite fields  we are not concerned  with sparsity,
the selection of suitable pivot elements nor numerical accuracy.
In particular, we can 
always choose the first non-zero element of a given row as our
pivot element, ensuring that  all entries to the left of a
pivot are known to be zero. Moreover, we need not be concerned with
producing elements in a field which require more and more memory to
store them. Avoiding producing very large field elements
would  again complicate pivot selection.
Thus our main concern is
to design a parallel algorithm which makes optimal use of modern
parallel computers.

We assume an underlying shared  memory computational model in which we
have access to  $k$ different processors, each of which  can run a job
independently  from any  other. The  processors communicate  with each
other through the  shared memory system. Our design  must take account
of the limited total memory bandwidth  of the system.  The aim of our
parallel algorithm is to achieve  adequate concurrency by dividing the
necessary computational work into smaller jobs and scheduling these to
run simultaneously  on the $k$ processors.   This in turn calls  for a
very careful  organization of the jobs  so that different jobs  do not
interfere  with  each   other.   We  will  address   these  issues  in
Section~\ref{sec:model}.

It is well known, see for example \cite[Theorems~28.7,~28.8]{cormen},
that  the asymptotic complexities of matrix  inversion and 
matrix multiplication are equal. An  algorithm that shows
this reduces inversion of a  $2n\times 2n$ matrix to 6 multiplications
of $n\times n$ matrices together with two inversions, also of $n\times
n$ matrices.  Applying this  approach recursively,  almost all  of the
run-time of  inversion is spent  in multiplications of  various sizes.
It is  not difficult  to see  that this extends  to our  somewhat more
general computation of  a reduced row echelon  form, with transformation
matrix.

We envisage that we are given a very large matrix over a finite
field for which we need to compute a reduced row echelon form together
with a transformation matrix. 

Several approaches to achieving this  already exist for finite fields.
One  approach  takes  advantage  of the  reduction  to  multiplication
mentioned  above,  and  delegates  the problem  of  parallelising  the
computation primarily to the much  easier problem of parallelising the
larger  multiplications.   Another  approach represents  finite  field
elements as floating point real numbers of various sizes in such a way
that (with care)  the exact result over the finite  field can still be
recovered. The  problem can then  be delegated to  any of a  number of
highly  efficient  parallel  floating  point  linear  algebra  systems
\cite{fflas}. Another approach by Albrecht \emph{et al.} works 
over small finite fields of characteristic 2 (see
\cite{ffmul} and \cite{albreachtweb}).
A  fourth  approach  in
unpublished   work  by   L\"ubeck   repeatedly   divides  the   matrix
horizontally, echelonising  each block  of rows  in parallel  and then
sorting  the rows  of the  matrix, so  that rows  with similar  length
initial sequences of zeros come together.

Large modern computers typically have a  large number of cores but may
well have  an order of  magnitude less  real memory bandwidth  per core
than a  typical laptop  or desktop  computer. On  such a  large modern
computer, at  the lowest level, a  core can only be  fully occupied if
essentially all  its data is in  the smallest, fastest level  of cache
memory (L1). At the  next level out, this work can  only be started if
essentially all its data is in the next level of cache (L2). A similar
statement is true for L3 cache.   To use a modern computer effectively
for  matrix  operations,  it  is  therefore  necessary  to  repeatedly
subdivide matrices in {\bf both} directions producing matrices that
are roughly square 
at a scale commensurate  with the size of the cache
at each level.

It is  not too hard to  design an algorithm for  matrix multiplication
with  these properties.  An  approach along  these  lines to  Gaussian
elimination  for  these roughly  square  submatrices  is described  in
Section~\ref{sub:echmult}.

For the whole matrix, we also need to subdivide
to achieve concurrency.

Unlike previous approaches,  we subdivide our very  large input matrix
into smaller  submatrices, called  \emph{blocks}, by  subdividing both
rows and  columns into roughly  square blocks sized so  that computing
with  individual blocks  on  individual  processors provides  adequate
concurrency.  We will  show that we gain a  concurrency improvement in
the  same   order  of   magnitude  as  the   number  of   blocks,  see
Proposition~\ref{prop:concur} and Theorem~\ref{the:concur}.

As well as computing the reduced row echelon form of the input matrix,
we  compute the  \emph{transformation matrix},  which encodes  the row
operations  used.   We  go  to  some  lengths  to  avoid  storing  any
unnecessary data as we keep track of the row operations, such as block
columns  of  the   transformation  matrix  known  to   be  zero.   Our
experiments show  that the  memory usage during  the execution  of the
algorithm  remains  fairly stable,  and  is  similar  to  storing  the  input
matrix.  The  runtime  is  broadly  comparable  to  multiplication  of
matrices to  the same size as  the input matrix.  This  gives evidence
that  we  have  succeeded  in   keeping  the  cost  of  computing  the
transformation matrix as small as possible and is in
accordance with the theoretical analysis of B\"urgisser et al.\, in
particular \cite[Theorem~16.12]{Buerg}.

The parallel Gauss algorithm is designed with three distinct environments in
view, although we have only implemented the first so far.

The first (and original) target is to use a single machine with
multiple cores with the matrix in shared memory. 
Here the objective is to subdivide
the overall task into many subtasks that can run concurrently. 

The second target is to use a single machine where the matrix does not
fit in memory, but does fit on disk. Here the objective is to
subdivide the matrix so that each piece fits into memory.

The third target is where several computers (probably each with
multiple cores) are to work on a single problem simultaneously. 
Here the objective is again to subdivide the
overall task into many subtasks that can run concurrently on
different computers with access to the same central disk.

%
%
%
%

\section{Preliminaries}

\subsection{Computational Model}
\label{sec:model}

Our  general  approach  to  parallel  computing  is  to
decompose  the  work to  be  done  into  relatively small  units  with
explicit dependencies.  Units all  of whose  dependencies are  met are
called ``runnable''  and a fixed  pool of  worker threads carry  out the
runnable units,  thereby discharging  the dependencies of  other units
and making them  in turn runnable.
A module,  called the \emph{scheduler},  is charged with keeping track of the
dependencies and  scheduling the
execution  of the  individual  units.
Data describing the  units of work
and their input and output data reside  in a shared data store, but we
take considerable care  to ensure that the speed of  this store is not
critical to  overall performance. It can  thus be the large  amount of
shared DRAM on  a multicore server, or disk storage  local to a single
server (for the case where we only have one server, but data too large
for its RAM)  or shared disk provided  sufficiently strong consistency
can be guaranteed.

In our implementations, we have used  two variations of this model. In
one, the \emph{task-model},   the units  of work  are \emph{tasks} and are
relatively
coarse-grained. A task represents, roughly  speaking, all  the work  that
needs to be  done in a particular  part of the matrix  at a particular
stage of  the algorithm. Dependencies  are between tasks, so  one task
cannot execute  until certain others  have completed and it  will find
the data it  needs where those previous tasks have  stored it.
To simplify understanding, we collect different data into
\emph{data packages}, the input and output of the tasks.
For example a typical output of the task $\CC$ is the
data package
$\A = (A, M, K, \rho', E, \lambda)$  with six
components which we refer to as  $\A.{\rm A} $, $\A.{\rm M}$, etc. 

In  the other  model,  the  \emph{job-model}, the  units  of work  are
the \emph{jobs} and are
significantly  more fine-grained  and  represent  a single  elementary
computation such as a  submatrix multiplication. More importantly, the
dependencies  are between  the  jobs  and the  data  they produce  and
consume.   Each job  requires  zero  or more  input  data objects  and
produces one or more output objects. A job is runnable when all of its
input data has  been produced.  This finer grain  approach allows more
concurrency.
Further  gain in efficiency can be achieved by giving the
scheduler guidance as to which jobs or tasks are urgent and which are less
so. This aspect  is mainly ignored in this paper.
The implementation of Meataxe64 \cite{meataxe64} uses the job-model and 
identifies the urgent jobs. 
An implementation in HPC-GAP by Jendrik Brachter and Sergio Siccha is
based on  the task-model.

The parallel  Gaussian elimination  algorithm is described  in Section
\ref{sec:theory}  as a  program called  {\sc the  Chief} charged  with
defining  the tasks  and the  data packages  they work  on.  {\sc  The
  Chief} is described as a sequential program but the reader should be
warned that the tasks are executed in an unpredictable order which may
bear little resemblance to the order  in which the tasks are submitted
by {\sc the Chief}.   The result of running {\sc the  Chief} is a plan
consisting of a list of  tasks, respectively jobs, together with their
inputs and  outputs whose  collective execution performs  the Gaussian
elimination.

Below we  specify {\sc The Chief} in terms of tasks, specified in
turn by jobs, for which the reader will find a more or less specific 
description in Section \ref{jobsandtasks}. 

\subsection{Gaussian elimination}

This subsection describes the Gaussian elimination process in a way we hope is
familiar to the reader, but in our  notation.
Given a matrix $H$ with entries in a field $\F$ the output 
of
the Gauss algorithm consists of matrices $M$, $K$ and $R$ and permutation
matrices $P_{\rho}$ and $P_{\gamma }$ such that
\begin{equation}\label{eq:gauss}
\begin{pmatrix} M & 0\\ K & 1 
\end{pmatrix}P_{\rho} H P_{\gamma} = 
\begin{pmatrix} -1 & R\\ 0 & 0 
\end{pmatrix}.
\end{equation}
The matrices $P_{\rho}$ and $P_{\gamma}$ perform row, respectively column,
permutations on the input matrix $H$ such that the top left-hand part
of the resulting matrix $P_{\rho}HP_{\gamma}$ is invertible (with inverse $-M$)
with the same rank as $H$. 
It should be noticed that the  permutation matrices $P_{\rho}$ and $P_{\gamma}$ in
Equation~(\ref{eq:gauss}) are not uniquely  defined. All that matters is
that they  put the  pivotal rows  and columns  into the  top left-hand
corner of the matrix.  We therefore choose to only specify
the sets of  row and column numbers containing  pivotal elements.
As we are chopping our matrix into blocks, we use the word
\emph{selected} to specify a row or column in which
a pivot has been already been found.
Hence  we  have to apply a permutation  to the rows during  the course of
the algorithm to ensure that our pivots remain located in columns with
increasing indices. We formalize how we store these permutation
matrices in the Definition~\ref{rs}.

\begin{definition}\label{rs}
  When we enumerate elements of a set, we always implicitly
  assume that these are in order. 
	To a subset $\rho = \{ \rho _1,\ldots , \rho _{|\rho |}\}  \subseteq \{ 1,\ldots,\alpha  \} $
        associate       a  $0/1$ matrix 
	$$\rho  \in \F^{|\rho| \times \alpha}  
	\mbox{ with }
\rho _{i,j} = \begin{cases} 1 & j = \rho _i \\ 0 & \mbox{ else.} \end{cases}  $$

	We call $\rho $ the {\em row-select} matrix and
        $\overline{\rho }$ the {\em row-nonselect} matrix
	associated to the set $\rho $ and its complement $\overline{\rho } = \{ 1,\ldots , \alpha \} \setminus \rho $. 
\end{definition}

\begin{remark}\label{rem:perm}
Note that the matrix
$$P_\rho =\left( \begin{array}{c} \rho \\ \overline{\rho } \end{array} \right)
\in \F^{\alpha\times\alpha}$$ is a permutation matrix associated to
the permutation $p_\rho$ with $p_\rho(i) = \rho_i$ for $i \le |\rho|$
and $p_\rho(i) = \overline{\rho}_{i-|\rho|}$ for $i > |\rho|.$ 
Note that $p_{\rho } = 1$ if and only if $\rho = \{ 1,\ldots , i \} $ for 
some $0\leq i \leq \alpha $ but for the other subsets $\rho $
we may recover $\rho $ from $p_{\rho }$ as the image 
$\{ p_{\rho }(1),\ldots , p_{\rho }(i) \}$ if $p_{\rho }(i+1) < p_{\rho }(i)$.
In this
sense we keep  switching
between permutations, subsets, and bitstrings in $\{0,1\}^\alpha$ which
represent the characteristic function of the subset. In particular,
this means that a  subset of cardinality
$2^\alpha - \alpha$ of \emph{special permutations}
of all the $\alpha!$ permutations of $\{1,\ldots,\alpha\}$
is sufficient for all our purposes.
\end{remark}

We  also note  that for  a matrix $H\in \F^{\alpha  \times \beta}  $ and  $\rho
\subseteq \{ 1,\ldots , \alpha \}$ and $\gamma \subseteq \{ 1,\ldots ,
\beta \} $ the matrix $\rho \times  H \in \F^{|\rho|\times \beta } $ consists
of  those rows  of  $H$ whose  indices lie  in  $\rho$ (retaining  the
ordering) and the matrix $H \times \gamma^{tr}  \in \F ^{\alpha \times |\gamma |}$
consists of those columns of $H$ whose indices lie in $\gamma $.
Therefore we also call $\gamma ^{tr} $ the {\em column-select} matrix
and $\overline{\gamma } ^{tr} $ the {\em column-nonselect} matrix associated to $\gamma $.

        \begin{remark} \label{echelon}
		Let $H\in \F^{\alpha \times \beta } $ be of rank $r$.
		Then the echelonisation algorithm will produce  sets
		$\rho \subseteq \{1,\ldots , \alpha \}$ and $\gamma \subseteq \{ 1,\ldots , \beta\}$ of
		cardinality $|\gamma |=|\rho | = r$ and matrices
		$M\in \F^{r\times r}$, $K\in \F^{(\alpha -r) \times r }$, $R\in \F^{r\times (\beta-r) }$
		such that
	$$ \left( \begin{array}{cc} M & 0 \\ K & 1 \end{array} \right) 
	\left( \begin{array}{c} \rho \\ \overline{\rho } \end{array} \right)  H \left( \begin{array}{cc} \gamma^{tr} & \overline{\gamma }^{tr} \end{array} \right)  = 
	\left( \begin{array}{cc} -1 & R \\ 0 & 0 \end{array} \right)  $$
	We will refer to these matrices as
	                $$ (M,K,R,\rho,\gamma ) :=\ech(H) .$$
\end{remark}

Strictly speaking, the job $\ech $ computes the negative 
row reduced echelon form of the input matrix $H$. However we will simply
call this the {\em echelon form of $H$}.

We assume we have an implementation of $\ech$
suitable for use on a single core.
The goal of this paper is to show how to scale it up.

\subsection*{Example}

We now present an example to
highlight some of the structure of the algorithm.
Consider the matrix $C \in \F_3^{6\times 6}$ given by
\[\left( \begin{array}{ccc|ccc}
  0 & 2 & 2 & 0 & 1 & 0\\
  0 & 2 & 2 & 1 & 2 & 2\\
  1 & 0 & 1 & 0 & 2 & 1\\
  \hline
  2 & 0 & 1 & 0 & 2 & 2\\
  0 & 1 & 1 & 2 & 1 & 1\\
  1 & 2 & 2 & 2 & 0 & 0
\end{array}\right)
\]
and divided into four blocks as shown.
Our first step is to echelonise the top left block. This yields
\[
\left( \begin{array}{cc|c}
  0 & 2 & 0 \\
    1 & 0 & 0\\
    \hline
    2 & 0 & 1
  \end{array}\right)
 P_{\{1,3\}}
 \left(
  \begin{array}{ccc}
  0& 2 & 2\\
  0 & 2 & 2 \\
  1 & 0 & 1 
  \end{array}\right)
  P_{\{1,2\}} = 
\left(
  \begin{array}{cc|c} 2 & 0 & 2\\
    0 & 2 & 2\\
    \hline
   0 & 0 & 0 
  \end{array}\right)
\]
matching Equation~\ref{eq:gauss}. Here
\[
 P_{\{1,3\}} =
 \left(
  \begin{array}{ccc}
  1& 0 & 0\\
  0 & 0 & 1 \\
  \hline
  0 & 1 & 0 
  \end{array}\right) \mbox{ and } 
 P_{\{1,2\}} =
 \left(
  \begin{array}{cc|c}
  1& 0 & 0\\
  0 & 1 & 0 \\
  0 & 0 & 1 
  \end{array}\right),\]
  where the bars separate selected from non-selected rows or columns.
So the outputs from this step are
the multiplier $M = \begin{pmatrix} 0 & 2 \\ 1 & 0 
\end{pmatrix}$, $K = \begin{pmatrix} 2 & 0 
\end{pmatrix}$, and $R = \begin{pmatrix} 2 \\ 2
\end{pmatrix}$ as well as the row-select set $\rho$, selecting the
first and third row and  the column-select set $\gamma$, selecting the
first two columns.

After this, two further steps are available, our parallel
implementation will do both concurrently.  We  mimic the row
transformations, applied to the top left block, on the top right
block.
We also use the echelonised top left block to clean out some columns in
the block beneath it. 
We explain them in that order. 

Mathematically, we need to left multiply the top right block by 
$\left( \begin{array}{cc|c}
  0 & 2 & 0 \\
    1 & 0 & 0\\
    \hline
    2 & 0 & 1
  \end{array}\right)
 P_{\{1,3\}}$. We take advantage of the known structure of this matrix
 to speed up this computation as follows.  We divide the top right
 block into the selected rows, as described by $\rho$:
$ \begin{pmatrix}
   0 & 1 & 0\\
   0 & 2 & 1
 \end{pmatrix}$ and   the rest
$ \begin{pmatrix}
1 & 2 & 2
 \end{pmatrix}$. We add the product of $K$ and the selected rows to
 the
 non-selected rows, giving
$ \begin{pmatrix}
1 & 1 & 2
 \end{pmatrix}$, and then  multiply the selected rows by $M$ giving
$ \begin{pmatrix}
   0 & 1 & 2\\
   0 & 1 & 0
 \end{pmatrix}.$ 

 The other step requires us to add multiples of the pivot rows from
 the echelonised top left block 
$ \left( \begin{array}{cc|c} 2 & 0 & 2\\
    0 & 2 & 2\\
 \end{array}\right)$
 to the bottom left block,  so as to
 clear the pivotal columns.
We again take advantage of the known structure of this matrix
to speed up this computation as follows.  We divide the bottom left
block 
into the selected columns, as described by $\gamma$:
$
\begin{pmatrix}
  2 & 0 \\
  0 & 1 \\
  1 & 2
\end{pmatrix}$ and the non-selected columns
$\begin{pmatrix}
1\\1\\2
\end{pmatrix}.$
Now we add the product of the selected columns and $R$ to these
non-selected columns and obtain
$\begin{pmatrix}
2 \\ 0 \\2
\end{pmatrix}.$

We must now mimic the row transformations used to clear the pivotal
columns of the bottom left block by adding multiples of rows
from the top right hand block to the bottom right hand block, i.e.\
we add the product of
$
\begin{pmatrix}
  2 & 0 \\
  0 & 1 \\
  1 & 2
\end{pmatrix}$ and 
$ \begin{pmatrix}
   0 & 1 & 2\\
   0 & 1 & 0
\end{pmatrix}$  to the bottom right hand block, which becomes
$ \begin{pmatrix}
  0& 1 & 0\\
  2 & 2 & 1 \\
  2 & 0 & 2 
\end{pmatrix}.
$

After all these steps, the overall matrix is
\[\left( \begin{array}{ccc|ccc}
  2 &0  & 2 & 0 & 1 & 2\\
  0 & 2 & 2 & 0 & 1 & 0\\
  0 & 0 & 0 & 1 & 1 & 2\\
  \hline
  0 & 0 & 2 & 0 & 1 & 0\\
  0 & 0 & 0 & 2 & 2 & 1\\
  0 & 0 & 2 & 2 & 0 & 2
\end{array}\right).
\]
At this stage we have dealt with all the consequences of the pivots
found in the top left block. What remains to be done is to echelonise
the top-right and bottom-left submatrices in the picture above and deal with
the consequences of any pivots found. Finally part of
the bottom right hand block will need to be echelonised.
This example, chopped $2\times 2$ does not demonstrate pivotal row
merging as described in Section~\ref{subdiv}

\subsection{Some guiding points}

\subsubsection{Subdividing a huge matrix}\label{subdiv}

Our parallel  version of the  Gaussian elimination algorithm  takes as
input  a  huge  matrix  and   subdivides  it,  both  horizontally  and
vertically, into  blocks. We do this  partly to 
obtain concurrency and partly to reduce the row length for cache
reasons.  Once the top-left block has been echelonised, the
same row  operations can be  applied simultaneously to all  the blocks
along the top row.
Putting the  rest of  the left-most block  column into  echelon form
requires  addressing  the  blocks  sequentially  from  the  top  down.
However, in the common case where the co-rank of the top-left block is
small, it is not  a great deal of computation. Once  the top block row
and left-most block  column are in echelon form, we  can update each of
the  blocks  in  the  other  rows and  columns  concurrently.


It should  be remarked that nothing  else can happen until  this first
block  is  echelonised, suggesting  that  we  should make  this  block
smaller  than  the  rest.   A  similar comment  applies  to  the  last
(bottom-right)  block.


Proceeding down the left-most block  column sequentially enables us to
merge into  a single  row of  blocks all those  rows whose  pivots lie
there. This  merging is done to  reduce the amount of  data access.
Usually, the work of  doing this merging is not great, so  that soon after the
first block echelonisation is complete, a large block multiply-and-add
can be done to every block of the matrix.
Without the merging, this multiply-and-add would be done piecemeal,
requiring multiple passes through the block.

\subsubsection{Echelonisation of a single block}\label{sub:echmult}

The performance of the 
echelonisation of a single block (as defined in Remark~\ref{echelon})
can  have a considerable impact  on the
concurrency, as many later jobs may depend on each
echelonisation. 

The  sequential  algorithm used  to  echelonise  individual blocks  is
recursive,  combining elements  of the  recursive inversion  algorithm
already  mentioned,  with  the  greater generality  of  the  technique
described  in  this  paper.   A  block is  divided  into  two,  either
horizontally  or  vertically,  and  the  top  (resp.   left)  part  is
echelonised. Using  a simplified version  of $\CC$ (resp.  $\UR$), see
Sections~\ref{sec:cleardown} and~\ref{sec:updaterow}, the remainder of
the  matrix and  the transformation  matrix are  updated, producing  a
second block  which must also be  echelonised. The results of  the two
echelonisations can be combined to compute the data package consisting
of  $M$,  $K$, $R$,  $P_\rho$  and  $P_\gamma$. Using  this  technique
recursively for all  matrices bigger than a threshold  size (about 300
dimensions) and  a simple direct Gaussian  elimination algorithm below
this size, echelonisation  of a block takes essentially  the same time
as a block multiply.


\subsubsection{Blocks change size and shape}

In    our    description    of   the    algorithm,    especially    in
Equations~\eqref{remnant} and \eqref{DDD}, we imagine that rows are
permuted so 
that those containing pivots (in blocks to the left) are moved to the top,
and the rest are moved to the  bottom.  In the program, however, these two
sets of rows  are held in different matrices, but  it seemed better to
try  to include  all the  information  in one  matrix, attempting  to
clarify the relationships between the parts.

As a  consequence of moving  rows about, the  sizes and shapes  of the
blocks  change  during  the  course   of  the  algorithm.   We  use  a
superscript to indicate  the ``stage'' of a particular  block, so that,
for example, ${\bf C}^j_{ik}$ is the matrix block at the $(i,k)$-position in
its $j$'th stage.  In some ways the original input matrix ${\mathcal
  C}$  (see Equation~\eqref{eq:echelon}  below)  is gradually
converted into  the matrix  ${\mathcal R}$.   An intermediate  matrix ${\bf  B}$
collects the pivotal rows from  ${\mathcal C}$ which are subsequently
deleted from ${\mathcal C}$.

\subsubsection{Riffles}

Our subset-stored permutations are used both to pull matrices apart
and to merge them together. We call the pulling apart an
`extract' where one matrix is separated into two matrices with the
selected rows (or columns) going to one, and the remaining rows to the
other.
We call the merging a `riffle' where one matrix is
assembled from two inputs with the subset-stored permutation
directing from which input each row (or column) comes. 


\subsubsection{Transformation matrix abbreviation}

To compute the transformation matrix, we could apply the same row
operations we performed on the input matrix to the identity matrix. 
In  practice this would be wasteful,
both of memory and  computational effort,  since initially  the identity
matrix contains  no information at  all, and the early  row operations
would mainly be adding zeros  to other zeros.  Considerable effort has
been expended  in this paper  to avoid  storing any parts  of matrices
whose values are  known \emph{a priori},
thereby saving both  memory, and work manipulating them. The graphs
shown in Section~\ref{sec:exper} suggest that this has been successful. The
details of this optimisation are the subject of Section~\ref{sec:urt}, which
may be skipped on first reading.


Note that  although the  output matrix ${\mathcal  M}$ in
Equation~\ref{eq:echelon} below
is  square, the
number of blocks in  each row may differ from the  number of blocks in
each column, since the block rows are indexed by the block column in which
the pivot was found and \emph{vice versa}.

\section{A parallel Gauss algorithm}  
\label{sec:theory}

\subsection{The structure of the algorithm}

We  now  describe  a  parallel version  of  the  Gaussian  elimination
algorithm which takes as input a huge matrix and subdivides it, both
horizontally and vertically, into blocks.

To distinguish between the
huge matrix and its blocks we use different fonts.

Let $\F$ be a finite field
and ${\mathcal C} \in \F^{m \times n }$ 
a huge matrix of rank $r$. 
We describe a parallel version
of the well-known {\sc Gauss} algorithm, which computes matrices
${\mathcal R}$, a transformation matrix ${\mathcal T}= \begin{pmatrix} \TT & 0_{r\times (m-r)} \\ \KK & 1_{(m-r) \times (m-r) } \end{pmatrix}$ and subsets $\varrho
\subseteq \{1,\ldots m\}$ and $\Upsilon \subseteq \{1,\ldots, n \}$,
both of cardinality $r$,
such that 
\begin{equation}\label{eq:echelon}
\begin{pmatrix} \TT & 0_{r\times (m-r)} \\ \KK & 1_{(m-r) \times (m-r) } \end{pmatrix}
\begin{pmatrix} \varrho \\ \overline{\varrho} \end{pmatrix} {\mathcal C} 
	\begin{pmatrix} \Upsilon^{tr} &  \overline{\Upsilon}^{tr} \end{pmatrix} 
= \begin{pmatrix} -1_{r\times r} & {\mathcal R} \\ 0 & 0 \end{pmatrix} 
\end{equation}
is in (negative row reduced) echelon form.

Comparing to  Remark~\ref{echelon}, we see  that our  task is to  chop our
huge input  matrix into  smaller blocks and  then, using  ${\ech}$ and
other jobs  on the blocks, to  effect the same operation  on the huge
matrix  as ${\ech}$  does  on  a single  block.   We therefore  choose
positive integers $a_i,b_j$ such that
$$\sum_{i=1}^a a_i = m , \ \sum _{j=1}^b b_j = n $$ 
and our algorithm copies the block-submatrices of 
the input matrix ${\mathcal C}$ into  data packages
$\C_{ij}\in  \F^{a_i \times b_j}$, called
\emph{blocks}, 
and performs tasks (as described in Section \ref{jobsandtasks})
on these smaller blocks.

We call the $a$ matrices $\C_{1j},\ldots, \C_{aj}$ the 
$j$-th \emph{block column}
and the $b$ matrices $\C_{i1},\ldots, \C_{ib}$ the $i$-th \emph{block
  row} of ${\mathcal C}$.
  Echelonising the overall matrix ${\mathcal C}$ is achieved by
  performing an echelonisation 
algorithm on 
individual blocks and using the resulting data to modify others.

The result of the Gaussian elimination as well as the intermediate  matrices
are partitioned into blocks: 
When the Gauss algorithm has completed, the matrix ${\mathcal R}$, which occurs in
Equation~(\ref{eq:echelon}),  consists of blocks and  has the form
\begin{equation} \label{remnant} 
	{\mathcal R}  = \begin{pmatrix} 
	R_1 & R'_{12} & \ldots & \ldots & R'_{1b} \\ 
	0 & R_2 & R'_{23} & \ldots & R'_{2b}  \\
\vdots & \ddots & \ddots & \ddots &  \vdots  \\
	0  & \ldots &  0 &  R_{b-1} &  R'_{b-1,b} \\
0  & \ldots & \ldots  & 0 &  R_{b} 
\end{pmatrix}
\end{equation}
with $R_j\in \F^{r_j\times (b_j-r_j)}$ and $R'_{jk} \in \F ^{r_j \times (b_k-r_k)} $.
Here $r=\sum_{j=1}^b r_j$ is the rank of ${\mathcal C}$ and for $k=1,\ldots ,b$ the sum
$\sum_{j=1}^k r_j $  is the rank of the submatrix of ${\mathcal C}$ consisting of 
the first $k$ block columns.

The time-consuming parts of the Gaussian elimination algorithm consist of
Step~1 and Step~3, whereas the  intermediate Step~2 is not.
After the first step the matrix ${\mathcal C}$ has been transformed
into an upper 
triangular matrix and prior to permuting columns the matrix
$(-1_{r\times r}|{\mathcal R}) \in \F ^{r\times n }$  has the shape 
\begin{equation} \label{DDD} 
	\tilde{{\mathcal R}} =
 \begin{pmatrix}
-1 \mid R_1 & X_{12} \mid R_{12} & \ldots & \ldots & X_{1b} \mid R_{1b} \\
0 \mid 0 & -1 \mid R_2 & X_{23} \mid R_{23} & \ldots &  X_{2b} \mid R_{2b} \\
\vdots & \ddots & \ddots & \ddots &  \vdots  \\
	0 \mid 0 & \ldots &  0 \mid 0 & -1 \mid R_{b-1} & X_{b-1,b} \mid R_{b-1,b} \\
0 \mid 0 & \ldots & \ldots  & 0 \mid 0 & -1 \mid R_{b} \\
\end{pmatrix},
\end{equation}
where $R_{jk} \in \F^{r_j\times (b_k-r_k)}$
and $X_{jk} \in \F ^{r_j\times r_k}$. 
To simplify notation in the  algorithms  below we define the 
data packages 
$\B_{jk} = (X_{jk} | R_{jk}) \in \F ^{r_j\times b_k} $
for $1\leq j<k\leq b$  and $\D_j=(\D_j.{\rm R},\D_j.\gamma )$, 
where $\D_j.{\rm R} = R_j$ and $\D_j.\gamma \subseteq \{1,\ldots , b_j\}$ is the set of indices of
pivotal columns in the block column $j$.

\subsubsection{Computing the transformation matrix}
During the course of the algorithm we also compute the
transformation matrix ${\mathcal T} \in \GL_{m}(\F )$
as given in Equation~(\ref{eq:echelon}). Of course this could be
achieved by simply performing the same row operations on an identity
matrix that were performed on ${\mathcal C}.$ This involves considerable
work on blocks known to be either zero or identity matrices. For example, if
${\mathcal C}$ is invertible, the work required to compute its inverse
is needlessly increased by 50\%. To avoid such extra work,
we only store the relevant parts of the blocks of the transformation matrix.

During    the   computation,    the   data    packages   $\M_{ji}    $
($j=1,\ldots,b;i=1,\ldots,a$) and $\K_{ih}$ ($i,h=1,\ldots ,a$) record
the matching status of the transformation matrix.  If $\tilde{C}_{ik}$
denotes the $i,k$-block  of the original input  matrix ${\mathcal C}$,
then initially $\C_{ik} = \tilde{C}_{ik}$  and $\B_{jk} $ has no rows.
Likewise, initially $\K_{ih} $ is the identity matrix if $i=h$ and the
zero matrix otherwise and the matrix $\M_{ji} $ has no rows and no columns. When storing
the data packages $\K $ and $\M $, we omit the columns and rows in the
blocks that are zero or still unchanged from the identity matrix.

To obtain the row select matrix we maintain further data packages 
$\E_{ij} = (\E_{ij}.\rho ,\E_{ij}.\delta )$ ($1\leq i\leq a, 1\leq j\leq b$)
with $\E_{ij}.\rho \subseteq \{1,\ldots , a_i \}$ is the set of indices of 
pivotal rows in block row $i$ with pivot in some block column $1,\ldots ,j$
and $\E_{ij}.\delta \in \{0,1\}^{|\E_{ij}.\rho|}$ records which indices already
occurred up to block column $j-1$.

During the algorithm, having handled block row $i$ 
in Step~1, we have 
$$\sum _{h=1}^a \M_{jh} \times \E_{hi}.\rho \times \tilde{C}_{hk} = \B_{jk} $$
and 
$$\sum _{h=1}^a \K_{\ell h} \times \overline{\E_{hi}.\rho} \times \tilde{C}_{hk} = \C_{\ell k} .$$
The final
column select matrix is the block diagonal matrix 
$$\Upsilon = \diag (\D_1^{a}.\gamma ,\ldots , \D_b^{a}.\gamma ).$$
and the row select matrix is 
$$\varrho = \diag (\E_{1b}.\rho ,\ldots , \E_{ab}.\rho )  .$$

\subsection{Step~1} \label{sec:step1}

Step~1 loops over the block rows. 
For the $j$-th column
of the $i$-th block row, the algorithm calls Task $\CC$ with the two
data packages $\C_{ij}$ and $\D_j^{i-1}$ as input. 
Task $\CC$ amalgamates the pivots in $\D_j^{i-1}.\gamma $
with  the pivots in the 
matrix $\C_{ij}$ to produce the
enlarged set $\D_j^{i}.\gamma $ as well as 
a new matrix $\D_j^i.{\rm R}$ of 
the (negative)  echelon form $(-1\mid  \D_j^i.{\rm R})$
followed by $0$ rows (up to column permutations 
which are remembered in $\D_j^i.\gamma $). 
Moreover, the task  
$\CC$ records in its output data package $\A _{ij} $ 
the row operations performed. 
 With the help of these data packages,
the first step then propagates the same elementary row operations
to the remaining blocks in block row $i$
as well as to block row $i$ of the transition matrix 
 using Task $\UR$.

Hence Step~1 assembles in the block row
$\begin{pmatrix} 0 &\ldots& 0& -1\!\mid\! \D_j.{\rm R}&  \B_{j,j+1}&  \ldots&
\B_{jb}
\end{pmatrix}
$
 the  rows of the original input matrix
whose pivotal entries  lie in block
column $j$ for $j \leq b$.  
These rows are then deleted from the data package $\C$. 
Thus having treated the $j$-th block column
the matrix $\C$ contains no rows whose pivots lie in block columns
$1,\ldots, j $ and the number of rows  of $\C$ is $m-\sum_{k=1}^j r_k$.

In particular, during the course of the entire algorithm 
the block rows $\C_{i,-}$  contain fewer and fewer rows, whereas the 
number of rows of the block row $\B_{j,-}$ increases accordingly. 
After completing the block column $j$ the matrices $\B_{jk}$ remain stable.

Similarly for  the transformation matrix,  the matrix $\M$ gains rows
whereas $\K$ loses rows.
However, things here  are slightly more
complicated  due  to   the  fact  that  we  do  not   store  the  full
transformation matrix.  
As we only store columns of $\K_{ih}$ that are not known a priori to be
zero or columns of an identity matrix, we have to ensure that all the needed
columns are present when calling $\URT$. The columns that are not yet
present are precisely the columns that correspond to the positions
of the  pivot rows and are stored in $\E_{hj}.\delta $.
 If $i=h$ this means we need
to insert into the correct positions columns of an identity matrix
and if $i\not=h$ then columns of a zero matrix. 
To achieve this, and also to efficiently deal with the cases where the matrices $\K _{ih}$ 
or $\M_{jh}$ are to be initialized we adapted the task $\UR $ to obtain 
$\URT$.

\subsection{Step~2}
This intermediate step becomes necessary as we do only store the relevant parts of 
the transformation matrices $\M _{ji}$. Before the upwards cleaning in Step~3 we
need to riffle 
in zero columns in $\M _{ji}$ so that the number of columns 
in $\M _{ji}$ is equal to the number of columns in $\M _{bi} $ for all $j$. 

\subsection{Step~3}

Then back cleaning only performs upwards row operations on the matrix
from Equation \eqref{DDD}  
to eliminate the $X_{jk}$. 
The matrices $R_{jk} $  from Equation \eqref{DDD} are stored in the data packages ${\bf R}_{jk}$ in 
{\sc the Chief}. 
Having cleaned block columns $b,\ldots , k-1$ the algorithm adds the $X_{jk}$ multiple of block row $k$ 
to block row $j$ for all $j\leq k-1$ to clear block column $k$. 
The same row operations are performed on the 
relevant part $\M $ of the transformation matrix.
\\

\begin{algorithm}[H]  
\caption  {{\sc The Chief}}
\SetKwData{Left}{left}\SetKwData{This}{this}\SetKwData{Up}{up}
\SetKwFunction{Union}{Union}\SetKwFunction{FindCompress}{FindCompress}
\SetKwInOut{Input}{Input}\SetKwInOut{Output}{Output}
\SetKwInOut{Exput}{Exput}

\Input{${\mathcal C}=:(\C^{1}_{ik})_{i=1,\ldots , a, k=1,\ldots , b}$, where $\C^1_{ik} \in \F^{a_i\times b_k}$}
\Output{${\mathcal R}=(\R^{k-j}_{jk}) _{j\leq k=1,\ldots,b}$, $\TT = (\M^{2a}_{jh})_{j=1,\ldots,b,h=1,\ldots, a}$, $\KK = (\K^{a}_{ih})_{i,h=1,\ldots , a}$, 
a row select matrix $\varrho \subseteq \{ 1,\ldots ,m\} $, 
	the concatenation of the  $\E_{ib}.\rho\subseteq \{ 1,\ldots , a_i\}$ ($i=1,\ldots,a$), 
	and
	a column select matrix $\Upsilon \subseteq \{1,\ldots , n\} $, the concatenation of the  $\D^{a}_{j}.\gamma \subseteq \{ 1,\ldots , b_j\} $ ($j=1,\ldots ,b$),  such that 
$$ \begin{pmatrix} \TT & 0 \\ \KK & 1 \end{pmatrix} 
\begin{pmatrix} \varrho  \\ \overline{\varrho}  \end{pmatrix} {\mathcal C}  
\begin{pmatrix} \Upsilon &\overline{\Upsilon } \end{pmatrix}  =    \begin{pmatrix} -1 & {\mathcal R} \\ 0 & 0 \end{pmatrix} $$}
 \BlankLine
\hspace*{-1cm}{\sc Step 1}: \\
	   \For{ $i$ from 1 to $a$}{
	\For{ $j$ from 1 to $b$}{
		$(\D^{i}_j; \A_{ij}) := \CC(\C^{j}_{ij},\D^{i-1}_{j},i) $\;  
		$\E_{ij} := ${\sc Extend}$(\A_{ij},\E_{i,j-1},j) $\;
	   \For{ $k$ from $j+1$ to $b$}{ 
	   $(\C^{j+1}_{ik},\B^{i}_{jk}) := \UR (\A_{ij}, \C^{j}_{ik},\B^{i-1}_{jk},i)$\;} 
	   \For{ $h$ from $1$ to $i$}{
		   $(\K^{j+1}_{ih},\M^{i}_{jh}) := ${\sc UpdateRowTrafo}$(\A_{ij},\K^{j}_{ih},\M^{i-1}_{jh}, \E_{hj},i,h,j)$\;
        }
}
}
           \hspace*{-1cm}
	           {\sc Step 2:}  \\
	 \For{$j$ from 1 to $b$}{ 
		 \For{$h$ from 1 to a}{ 
			 $\M^{a+1}_{jh} := ${\sc RowLengthen}$(\M^{a}_{jh} , \E_{hj}, \E_{hb} )$\; 
	 }
        }
           \hspace*{-1cm}
	{\sc Step 3:}  \\
	   \For{$k$ from $1$ to $b$}{
	   $\R^{0}_{kk} := ${\sc Copy}$ (\D^{a}_k)$\;}
	   \For{$k$ from $b$ downto $1$}{
	 \For{$j$ from 1 to $k-1$ }{
		 $(\X_{jk},\R^{0}_{jk}) := \PCU(\B^{a}_{jk},\D ^{a}_k) $\;
	\For{ $\ell $ from k to b }{
		$\R^{\ell-k+1}_{j\ell}:=\CU (\R^{\ell-k}_{j\ell},\X_{jk},\R^{\ell-k}_{k\ell}) $\;
        }
	\For{ $h$ from 1 to a }{
	$\M^{a+h}_{jh}:=\CU(\M^{a+h-1}_{jh},\X_{jk},\M^{a+h-1}_{kh} )$\;} 
        	}
	}
\end{algorithm}

\section{Jobs and Tasks}\label{jobsandtasks}

\SetAlgorithmName{Task}{Task}{task} 
\addtocounter{algocf}{-1}

\subsection{The jobs}


In this section we describe the jobs. These are fundamental steps
that are  later used  to define the  tasks. Many of  the jobs  take as
input one or more matrices.
While the input and output matrices of the
jobs within  the global  context of the  parallel Gauss  algorithm are
blocks computed from  a huge input matrix, the jobs  described in this
section work locally only on these matrices. In current implementations,
each job can be performed by a single threaded computation, entirely in
RAM and in a reasonable amount of time.

\begin{description}
\item [{\sc cpy}] This task simply copies the input matrix to the output.
\item[{\sc mul}]  This job performs a matrix {\bf mul}tiplication. It  
takes as input two matrices $A\in \F^{\alpha\times\beta}$ and $B\in
\F^{\beta\times \delta}$
and returns as output the matrix $A\times B\in\F^{\alpha\times \delta}$.  
\item[{\sc mad}]  This job performs a matrix {\bf m}ultiplication followed
by a matrix {\bf ad}dition.  It
takes as input  matrices $A\in \F^{\alpha\times\delta}$ and $B\in
\F^{\alpha\times \beta}$ and $C\in\F^{\beta\times\delta}$ and 
returns the matrix $A + B\times C\in
\F^{\alpha\times \delta}.$
\item[{\sc ech}] This job performs an {\bf ech}elonisation as described in 
	Remark \ref{echelon}. 
We will refer to the job as
		$$ (M,K,R,\rho,\gamma ) :=\ech(H) .$$
%
%
%
%
%
\item[{\sc cex}] This job performs two {\bf c}olumn {\bf ex}tracts.
It takes as input a matrix $H\in \F^{\alpha\times\beta}$
and a subset $\gamma\subseteq\{1,\ldots,\beta\}$ and 
returns the matrices $H\times \gamma^{tr}$ and $H\times \overline{\gamma}^{tr}$,
 consisting of all those columns
of $H$ whose indices lie in $\gamma$, respectively do not lie in
$\gamma$,  as described in Definition~\ref{rs}.
\item[{\sc rex}] This job performs two {\bf r}ow {\bf ex}tracts.
It takes as input a matrix $H\in \F^{\alpha\times\beta}$
and a subset $\rho\subseteq\{1,\ldots,\alpha\}$ and 
returns the matrices $\rho \times H$  and $\overline{\rho}\times H$, 
consisting of all those rows
of $H$ whose indices lie in $\rho$, respectively do not lie in
$\rho$,  as described in Definition~\ref{rs}.

\item[{\sc unh}] 
This job performs a {\bf un}ion plus {\bf h}istory. It takes as input  a subset
$\rho_1 \subseteq \{ 1,\ldots , \alpha \}$ and,
for $\alpha_0 = \alpha - |\rho_1|$, 
a subset $\rho_2 \subseteq \{ 1,\ldots , \alpha_0 \}$ and 
returns a subset $\rho\subseteq \{ 1,\ldots , \alpha \}$ defined as
follows. 
Write $\{1,\ldots,\alpha\} \setminus
\rho_1 = \{x_1, \ldots, x_{\alpha_0} \}$. Define
$$\rho=\rho_1 \cup \{ x_i \mid i \in \rho_2 \} =:
\{y_1, \ldots, y_{r} \}$$
as an ordered set. Then  $u\in \{0,1\}^{r} $ with 
 $u_\ell =0$ if $y_\ell \in \rho_1$ and $u_\ell  = 1$ 
otherwise.  We refer to this job as
	   $$(\rho,u) := \PVC (\rho_1,\rho_2 ) .$$

\item [{\sc un0}] This job does the same as {\sc unh} except that the
  first input  set of {\sc unh} is omitted and assumed empty. 
           
\item[{\sc mkr}] It takes two sets $\rho _1 \subseteq \rho _2 $ and 
	produces a bitstring 
$\lambda \subseteq \{0,1\}^{|\rho _2 |}$ with $1$s corresponding to the elements
in $\rho_1$ and $0$s corresponding to the elements in
$\rho _2\backslash \rho_1.$

\item[{\sc rrf}] This job performs a row riffle.
The input consists of a bit string $u\in\{0,1\}^r$ and two matrices
$B\in \F^{\alpha\times\beta}$ and $C\in \F^{\gamma\times \beta}$ with
$\alpha+\gamma=r$, where the number of $0$s in $u$ is $\alpha$ and the
number of $1$s in $u$ is $\gamma$. The job returns the new matrix
$A\in  \F^{r\times b}$ whose rows are the rows of $B$ and $C$
combined according to $u$. In some sense this is  the inverse of row extract.
\item[{\sc crz}] Similarly to row riffles we also need 
	column riffles, but we only need to riffle in zero columns.
\item[{\sc adi}] It takes as input a matrix $K\in \F^{\alpha\times\beta}$ 
	and a bitstring $\delta \in \{ 0,1 \} ^{\beta } $ 
and puts 
	$K_{i,j_i}:= 1$ if $j_i$ is the position of the $i$th $0$ in $\delta $.
	Note that combining {\sc crz} with {\sc adi} allows us to riffle in
	columns of the identity matrix. 
\end{description}

\subsection{The tasks}\label{sec:tasks}

We now describe the tasks on which our Gaussian elimination algorithm 
depends. As mentioned above, a task receives data packages as input,
which in turn may consist of several components, 
and returns data packages as output.

\begin{definition}\label{ready}
	A \emph{data package} is  a record of one or several components. 
	A data package is called \emph{ready} (for a given scheduler) 
	if the task that produces it as 
	output has finished, regardless whether this task has computed all its components.
	If there is no task having this data package as an output, then 
	we also consider it ready.
\end{definition}

{\bf Example}: Task $\UR $. \\
 If  called  with  the
parameter $i=1$  the task  $\UR $ can start,  even though the component  $\A.{\rm A}$ of
the data package $\A$ has not been computed, after the task $\CC $ for $i=1$ 
has completed. 
Note that for $i=1$, no job in the task $\UR $ takes the component $\A.{\rm A} $ as an input.
Also the data package $\B_{jk}^{0} $ is an input to $\UR $ but not computed by any task in
 {\sc the Chief}. So therefore it is also considered ready. 
\\

\begin{algorithm}[H]
\caption{{\sc Extend}\label{extend}}
\SetKwInOut{Input}{Input}\SetKwInOut{Output}{Output}

\Input{$\A=(A,M,K,\rho',E,\lambda )$,  $\E = (\rho,\delta)$ with $\rho \subset \{1,\ldots , \alpha \} $,
$\delta $  a riffle, $j$}
\Output{$\E  $.} 
\BlankLine
\begin{tabular}{lll}
	$j=1$ & (UN0):& $(\E.\rho , \E.\delta ) := \PC0 (\A.\rho' )$;\\
	$j\neq 1$ & (UNH):& $(\E.\rho , \E.\delta ) := \PVC (\E.\rho , \A.\rho' )$;\\
\end{tabular}
\end{algorithm}
\mbox{} \\

%

\begin{algorithm}[H]
\caption  {{\sc RowLengthen}\label{rl}}
\SetKwData{Left}{left}\SetKwData{This}{this}\SetKwData{Up}{up}
\SetKwFunction{Union}{Union}\SetKwFunction{FindCompress}{FindCompress}
\SetKwInOut{Input}{Input}\SetKwInOut{Output}{Output}

\Input{$\M  \in \F^{\alpha \times g_1} $, $\E_1.\rho
  \subseteq \E_2.\rho \subseteq \{1,\ldots , \alpha \}$ of
  sizes $g_1, g_2$ with $g_1 \le g_2$.}
\Output{
$\M \in \F^{\alpha \times g_2} $.} 
\BlankLine
(MKR):\quad $\lambda := \MKR ( \E_1.\rho , \E_2.\rho ) $\;
(CRZ):\quad $\M := \CRZ(\M,\lambda)$\; 
\end{algorithm}
\mbox{} \\

\begin{algorithm}[H]
\caption  {{\sc ClearUp}\label{ClearUp}}
\SetKwData{Left}{left}\SetKwData{This}{this}\SetKwData{Up}{up}
\SetKwFunction{Union}{Union}\SetKwFunction{FindCompress}{FindCompress}
\SetKwInOut{Input}{Input}\SetKwInOut{Output}{Output}

\Input{$\R\in \F^{\alpha \times \beta}$,  $\X \in \F ^{\alpha \times
    \gamma} $, $\M \in \F^{\gamma \times \beta }$.}
\Output{$\R \in \F^{\alpha \times \beta} $.} 
\BlankLine
(MAD): \quad $\R := \R + \X \times \M $\;
\end{algorithm}
\mbox{} \\

\begin{algorithm}[H]
\caption  {{\sc PreClearUp}\label{PreClearUp}}
\SetKwData{Left}{left}\SetKwData{This}{this}\SetKwData{Up}{up}
\SetKwFunction{Union}{Union}\SetKwFunction{FindCompress}{FindCompress}
\SetKwInOut{Input}{Input}\SetKwInOut{Output}{Output}

\Input{$\B\in \F^{\alpha \times \beta}$,  $\D $, with $\D.\gamma
  \subseteq \{ 1,\ldots , \beta \}$ of cardinality $g$.}
\Output{
$\X \in \F^{\alpha \times g} $, 
$\R \in \F^{\alpha \times (\beta - g)} $.} 
\BlankLine
(CEX): \quad 
$\X := \B \times \D.\gamma^{tr} $; $\R := \B \times \overline{\D.\gamma}^{tr} $\;
\end{algorithm}
\mbox{} \\

\begin{algorithm}[H]
\caption  {{\sc Copy}\label{Copy}}
\SetKwData{Left}{left}\SetKwData{This}{this}\SetKwData{Up}{up}
\SetKwFunction{Union}{Union}\SetKwFunction{FindCompress}{FindCompress}
\SetKwInOut{Input}{Input}\SetKwInOut{Output}{Output}

\Input{$\D $, with $\D.{\rm R} \in \F^{\alpha \times \beta} $.}
\Output{
$\R \in \F^{\alpha \times \beta } $.} 
\BlankLine
(CPY): \quad $\R := \D.{\rm R} $\;
\end{algorithm}
\mbox{} \\

\subsubsection{Task $\CC$}\label{sec:cleardown}

Task $\CC$ works on block columns. 
Suppose that $j\in \{1,\ldots,b\}$ and $\CC$ works on block column
 $j$ which contains
$b_j$ columns. Task $\CC$ assumes that 
block column $j$  truncated after row $i-1$ is in row echelon form
and the aim of task $\CC$ is to replace  the block column $j$
truncated
after row $i$ by its row echelon form.

Task $\CC$ takes two
data packages $\C$ and $\D$ as input. The first data package $\C$
is the block
$\C_{ij}$ which is the block in the $i$-th block row of
block column $j$. The second data set $\D$ contains two data elements.
The data element $\D.{\rm R}$ is a matrix such that block column $j$ 
truncated after block row $i-1$ is in row echelon form $(-1\mid \D.{\rm R})$
followed by $0$ rows.
The data element $\D.\gamma
\subseteq  \{ 1,\ldots  ,b_j \}$ 
contains indices of the pivots assembled in block column
$j$ truncated after block row $i-1$. 

The task produces two data packages $\A$  and $\D$ as outputs.
The data elements stored in the data
package $\A$ are required to propagate row operations performed during
the call to Task $\UR$ to other blocks in block row $i$. The data
elements stored in the data package $\D$ are required for a 
subsequent call to $\CC$ for the block $\C_{i+1,j}$ in block column $j$.

We begin by partitioning 
the input block $\C$  according  to
$\D.\gamma$ into pivotal and non pivotal columns $\C = (\A.{\rm A}\mid A')$.  
Using the rows of the matrix $(-1\mid \D.{\rm R})$ we can
reduce $\C$ to $(0 \mid H')$  where $H'=A'+\A.{\rm A}\times \D.{\rm R}$. 
The next step is to call job ${\ech}$  to
echelonise $H'$
and obtain
$$ (\A.{\rm M},\A.{\rm K},R,\A.\rho',\gamma' ) :=\ech(H'), $$
where $\A.\rho'$ is the set of pivotal rows of $H'$ and
$\gamma'$ the set of pivotal columns.

As block column $j$  truncated after block row $i-1$ is in row echelon
form $(-1_{r\times   r} \mid \D.{\rm R})$
followed by $0$ rows, we now wish to determine a new remnant
matrix $\hat{R}$  (which will become the new $\D.{\rm R}$) 
such that  block column $j$ 
truncated after block row $i$ is in row echelon form $(-1_{(r+r')\times
 (r+r')}\mid \hat{R})$ followed by $0$ rows.
To achieve this, the we have to add
the $r'$ pivots of $H'$ to  $-1_{r\times r}$
and reduce $\D.{\rm R}$ according to $(-1_{r'\times r'}\mid R)$.
This  amounts to first separating
the columns of $\D.{\rm R}$ into those containing pivot entries of $H'$ and
those that do not, i.e.\ writing $\D.{\rm R}=(\A.{\rm E} \ \mid \ R')$ with the help of the
row select and row non-select matrices $\gamma'$ and $\overline{\gamma'}$.
We then use the rows of the matrix 
$(-1_{r'\times r'}\mid R)$ to reduce $\D.{\rm R}$ to $(0\mid R' + \A.{\rm E}\times \D.{\rm R})$.
The new set $\D.\gamma$ of all pivotal columns of block column $j$
truncated
after block row $i$ is now obtained by combining the old set $\D.\gamma$ and
$\gamma'$. We record in $\A.\lambda$ the information which of these
indices came from the $r'$ pivots of $H'.$ Finally, the new remnant
$\hat{R}$ is obtained by interleaving the rows of 
$R' + \A.{\rm E}\times R$ with the rows of $R$ according to $\A.\lambda$ and
storing the resulting matrix as the new  $\D.{\rm R}.$ 

The following pseudo code details Task $\CC$:\\
\bigskip
\SetAlgoNoLine
\begin{algorithm}[H]
\caption  {{\sc ClearDown}\label{CC}}
\SetKwData{Left}{left}\SetKwData{This}{this}\SetKwData{Up}{up}
\SetKwFunction{Union}{Union}\SetKwFunction{FindCompress}{FindCompress}
\SetKwInOut{Input}{Input}\SetKwInOut{Output}{Output}

\Input{$\C \in \F^{\alpha \times \beta}$,  $\D.\gamma \subseteq \{ 1,\ldots , \beta\} $ of cardinality $r$, $\D.{\rm R} \in \F ^{r\times (\beta-r) }$, $i$;}
\Output{$\D.{\rm R} \in \F^{(r+r') \times (\beta-r-r') }$,  $\D.\gamma \subseteq \{ 1,\ldots , \beta\}$ of cardinality $r+r'$ and
$\A=(A,M,K,\rho',E, \lambda)$  where $A \in \F^{\alpha \times r}$, $M\in \F^{r' \times r' }$, $E\in \F^{r\times r' } $, $K\in \F^{(\alpha -r')\times r'}$,  $\rho'\subseteq \{ 1,\ldots , \alpha-r \} $
	of cardinality $r'$, $\lambda  \in \{ 0,1\}^{r+r'}$.
}
\BlankLine
\uIf{$i=1$}{
	(ECH):\quad $( \A.{\rm M},\A.{\rm K}, \D.{\rm R},\A.\rho',\D.\gamma) := \ech(\C)  $\;
}
\Else{
(CEX):\quad $\A.{\rm A}:= \C \times \D.\gamma ^{tr}$;\quad $A':=\C \times \overline{\D.\gamma}^{tr} $\;
(MAD):\quad  $H:= A'  + \A.{\rm A} \times \D.{\rm R} $\;
(ECH):\quad $( \A.{\rm M},\A.{\rm K},R,\A.\rho',\gamma') := \ech(H)  $\;
(CEX): \quad  $\A.{\rm E}:=\D.{\rm R} \times (\gamma')^{tr}$, $R':=\D.{\rm R}\times (\overline{\gamma'})^{tr}$\;
(MAD):\quad  $R':=R' + \A.{\rm E} \times R $\; 
(UNH): \quad $(\D.\gamma,\A.\lambda) := \PVC( \D.\gamma , \gamma') $\;
  (RRF): \quad  $\D.{\rm R}:=\RRF(\A.\lambda,R',R)$\;
}
\end{algorithm} 

\subsubsection{Task $\UR$}\label{sec:updaterow}

Given $i\in \{1,\ldots,a\}$, the Task $\UR$ works on block $\C=\C_{ik}$ in 
block row $i$ and block column $k$.
It  takes as input data packages $\A$, $\C$ and $\B$,
where the data package $\A$ encodes the necessary information computed
by $\CC$ when transforming
an earlier  block in the same block row $i$ 
into echelon form.

The same row operations that were  performed on this earlier block now need
to be performed on $\C$.  This subroutine also assembles in the matrix
$\B$ the  rows in block column  $k$ whose pivotal entry  lies in block
column $j$ for $j+1\leq k \leq b$.  The new data package $\C$ returned by
Tasks $\UR$  then is equal  to the transformed input  matrix $\C$ with  these rows
deleted.

%
%

The following pseudo code details Task $\UR$:\\

\begin{algorithm}[H]
\caption  {{\sc UpdateRow}\label{UR}}
\SetKwData{Left}{left}\SetKwData{This}{this}\SetKwData{Up}{up}
\SetKwFunction{Union}{Union}\SetKwFunction{FindCompress}{FindCompress}
\SetKwInOut{Input}{Input}\SetKwInOut{Output}{Output}
\SetKwInOut{Exput}{Exput}

\Input{$\A=(A,M,K,\rho',E,\lambda )$,  $\C \in \F^{\alpha \times \beta}$, 
	 $\B \in \F^{r\times \beta}$, $i$. } 
\Output{$\C\in \F^{(\alpha -r')\times \beta} $, $\B\in \F^{(r+r')\times \beta}$.}
\BlankLine


\begin{tabular}{l|lcl}
	$(1)$ &  $i\neq 1$ &(MAD): & $Z := \C+\A.{\rm A}  \times  \B $;\\
&   $i=1$  & (CPY):  & $Z := \C;$\\
  \hline
  $(2)$&  always &  (REX): & $V:=\A.\rho'  \times  Z$; and $W:=\overline{\A.\rho'}\times Z$;\\
  $(3)$&  always &	(MUL): & $X:=\A.{\rm M} \times  V $;\\
  $(4)$&  $i\neq 1$ & (MAD):&  $S:=\B+\A.{\rm E} \times X$;\\
  \hline
  $(5)$&   $i\neq 1$ & (RRF): &  $\B:=\RRF (\A.\lambda,S,X)$;\\
&   $i=1$ & (CPY): & $\B := X$;\\
  \hline
  $(6)$& always  & (MAD): & $\C := W+\A.{\rm K} \times V $;\\
\end{tabular}
\end{algorithm}

\begin{remark}\label{remread} 
	In the case $i=1$ in Task $\UR $ 
 we work with the first block row. 
 Therefore we do not need to perform the upwards cleaning on the data package $\C $  
 and the data package $\B $ is initialized accordingly.
 Note that for $i=1$ 
 the task $\CC $ did not compute the components 
 $\A.{\rm A}$, $\A.{\rm E}$  and $\A.\lambda $ and also the input data package $\B $ of $\UR $ is not present. 
 \end{remark}

\subsubsection{Task $\URT $}\label{sec:urt}

If one is not too concerned about performance, then it would be
possible to generate an identity matrix $\K$  and apply the $\UR $ task 
replacing $\C$ by $\K$ and $\B$ by $\M$ 
to mimic the relevant row operations performed to obtain the
transformation matrix $\M$ and the cleaner matrix $\K$. This would
result in a lot of needless work 
performed on zero or identity matrices. 
The main difference is that we never store any columns known to belong
to an identity or a zero matrix. Instead we insert these columns just
before they are needed.
Moreover, we never add a matrix known to be
zero or multiply by  a matrix known to be the identity.

As a result, we require a separate procedure, $\URT$, to mimic the row
operations on  the transformation  matrix.  $\URT$ still  performs the
same  steps as  $\UR$, identified  by  the same  numbers, however  it
requires some  additional steps,  indicated by the  symbol $+$  in the
first column  and which insert  some unstored columns into  the matrix
$\K$. The various instances of a given step are due to the fact that we
can often  avoid unnecessary  work.  In  particular, $\URT$  takes as
an additional input the integers  $i,j,h$, with $h\leq i$. 
The integer  $i$ indicates the  current block
row  and $h$  the current  block  column, on  which to  mimic the  row
operations performed during  $\UR$ on block row $i$. 
If $j>1$ then we already computed some input $\K$ into which we need to 
riffle in zero (if $i\neq h$) or the relevant 
columns of the identity matrix (if $i = h $).
It turns out that it is easier to always riffle in zero  (the first line marked with $+$) 
and mimic the special case $i=h$ by adding the correct $0/1$ matrix to $V$ later in
the other line marked with $+$. 
If $j=1$ then we should initialise $\K$ with zero (if $i\neq h$) or  the relevant 
columns of the identity matrix (if $i = h $). As we only need the input $\K $ 
to define $V$ and $W$ in line (2), we mimic this by remembering that 
$W=0$ in this case and $V$ is either $0$ (if $i\neq h$) or the identity matrix
if $i=h$. So for $j=1 $ and $h=i$ we omit the multiplication
by the identity in lines (3) and (6). 

If  $i=1$ again Remark \ref{remread} applies accordingly to line (4). 
Note that due to the fact that $h\leq i$, this only happens if $h=i=1$.

In the following pseudo code describing Task $\URT$ we indicate in the last column 
which unstored matrices are implicitly known to be $0$ or the identity $1$.
\\

\begin{algorithm}[H]
\caption  {{\sc UpdateRowTrafo}\label{URT}}
\SetKwData{Left}{left}\SetKwData{This}{this}\SetKwData{Up}{up}
\SetKwFunction{Union}{Union}\SetKwFunction{FindCompress}{FindCompress}
\SetKwInOut{Input}{Input}\SetKwInOut{Output}{Output}
\SetKwInOut{Exput}{Exput}

\Input{$\A=(A,M,K,\rho',E,\lambda )$,  $\K \in \F^{\alpha \times \beta}$, 
	 $\M \in \F^{r\times \beta'}$, $\E=(\rho,\delta )$, $i, h, j$. } 
	 \Output{$\K\in \F^{(\alpha -r')\times (\beta+|\delta|)} $, $\M\in \F^{(r+r')\times \beta'}$.}
\BlankLine

         \begin{tabular}{l|lcl|l}
		 \hline 
		 & case & job & command & remark \\ 
		 \hline
$+$&  $j\neq 1$ & (CRZ): &  $\K := \CRZ(\K,\E.\delta)$; \\
&  $j= 1$ &  &  $-$ & \emph{$\K \mbox{ is } 0$}\\
  \hline
  $(1)$ & $h\neq i$, $j\neq1$ & (MAD): &  $Z := \K + \A.{\rm A}  \times  \M $; \\
&  $h\neq i$, $j= 1$ & (MUL): &  $Z := \A.{\rm A}  \times  \M $; &
  \emph{$\K\mbox{ is } 0$}\\
&  $h= i$, $j\neq 1$ & (CPY): &  $Z :=\K $; &
  \emph{$\M\mbox{ is } 0$}\\
&  $h= i$, $j= 1$ &  & $-$ &
  \emph{$Z\mbox{ is } 0$}\\
  \hline
  $(2)$&$\neg (j= 1 \wedge h=i)$ & (REX):  &  $V:=\A.\rho'  \times  Z$;  \\
&& &  $W:=\overline{\A.\rho'}\times Z$; \\
&  $j=1 \wedge h=i$& & $-$ & \emph{$V$, $W\mbox{ are } 0$}\\
  \hline
$+$ & $j\neq 1 \wedge h=i$ &  (ADI):  &  $V:={\rm ADI} (V,\E.\delta)$; \\ 
& $j=1 \wedge h=i $ & & $-$ & \emph{$V\mbox{ is } 1$}\\
    \hline
    $(3)$ & $\neg (j= 1 \wedge h=i)$    & (MUL):  &   $X:=\A.{\rm M} \times  V$; \\
    &  $j=1 \wedge h=i$& (CPY): & $X:= \A.{\rm M}$  & \emph{$V\mbox{ is } 1$}\\
    \hline
    $(4)$&$h\neq i$ & (MAD):  &  $S:=\M+\A.{\rm E} \times X$; \\
& $h=i\neq 1$ & (MUL):  &  $S:=\A.{\rm E} \times X$; & \emph{$M \mbox{ is } 0$}\\
& $h=i= 1$ &   &  $-$ & \emph{$S$  no rows}\\
    \hline
    $(5)$& $\neg (h=i=1)$ & (RRF):  &  $\M:=\RRF (\A.\lambda,S,X)$; \\
&     $h=i= 1$ & (CPY):  &  $\M:=X$; & \emph{$S $  no rows} \\
    \hline
    $(6)$& $\neg (j= 1 \wedge h=i)$    & (MAD):  &  $\K := W+\A.{\rm K} \times V $; \\
   &   $j=1 \wedge h=i$& (CPY): & $\K := \A.{\rm K}$; & \emph{$V \mbox{ is }  1$, $W\mbox{ is } 0$}\\
\end{tabular}
\end{algorithm}



\section{Concurrency analysis}\label{sec:concur}

To measure the degree of concurrency  of our algorithm we assign costs
to each of  the tasks.  We perform a relative  analysis, comparing the
cost  of a  parallel Gauss  algorithm with  the cost  of a  sequential
algorithm.  Therefore,  to simplify our  analysis, we assume  that the
cost  of a  matrix multiplication  of  $(\alpha \times  \beta )  \cdot
(\beta  \times \gamma  ) $  possibly followed  by addition  is $\alpha
\beta \gamma $ and the cost  of echelonising an $\alpha \times \beta $
matrix of  rank $r$  is $\alpha \beta  r $. It seems plausible that when
assuming that these costs are homogeneous functions of some degree,
bounded below by  $\omega$  (see \cite[Chapter~16]{Buerg}),
then in the results of Proposition~\ref{prop:concur} and
Theorem~\ref{the:concur} the degree of concurrency  can be replaced by some
constant times $a^{\omega-1}$. 
We  also assume  that all
blocks  are  square  matrices  of size  $\alpha\times  \alpha$,  where
$\alpha$ is not too small and
$$\alpha = \frac{n}{a} = \frac{m}{b}.$$
Then the  tasks {\sc  Extend}, {\sc RowLengthen},
{\sc  PreClearUp},  and {\sc  Copy} do not perform  any  time
consuming  operations  (compared to {\sc ClearDown} and 
{\sc UpdateRow}), so we  assign  cost  0 to these tasks.  

\begin{lemma}\label{closecosts}
 The cost of {\sc ClearDown} is 
bounded above by $\alpha^3$ 
and the cost of {\sc UpdateRow} is bounded above by  $1.25 \alpha^3$.
\end{lemma}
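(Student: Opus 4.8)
The plan is to go through the pseudocode of Task $\CC$ (i.e.\ \textsc{ClearDown}) and Task $\UR$ (\textsc{UpdateRow}) line by line, tally the cost of each elementary job using the stated cost model (a multiply-and-add of shape $(\alpha\times\beta)\cdot(\beta\times\gamma)$ costs $\alpha\beta\gamma$, an echelonisation of an $\alpha\times\beta$ matrix of rank $r$ costs $\alpha\beta r$, and all the combinatorial jobs \textsc{cex}, \textsc{rex}, \textsc{rrf}, \textsc{unh}, etc.\ cost $0$), and then bound the sum. Throughout, all blocks are $\alpha\times\alpha$; the relevant rank-type quantities are the corank $r$ of the pivots already present (the number of columns of $\D.\mathrm{R}$ before the call is $\alpha-r$, its number of rows is $r$) and the rank $r'$ of the freshly echelonised matrix $H$, with $r+r'\le\alpha$. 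The key observation making everything work is that these internal dimensions are all bounded by $\alpha$ and satisfy $r+r'\le\alpha$, so each individual multiply costs at most $\alpha^3$ and, more importantly, the costs of the two multiplies that share the ``pivot'' dimension add up nicely.

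For \textsc{ClearDown}: in the generic branch $i\ne1$ the nonzero-cost jobs are the two \textsc{mad}s and the one \textsc{ech}. The first \textsc{mad} computes $H:=A'+\A.{\rm A}\times\D.{\rm R}$ of shape $(\alpha\times r)\cdot(r\times(\alpha-r))$, costing $\alpha r(\alpha-r)$. The \textsc{ech} on $H\in\F^{\alpha\times(\alpha-r)}$ of rank $r'$ costs $\alpha(\alpha-r)r'$. The second \textsc{mad} updates $R':=R'+\A.{\rm E}\times R$ of shape $(r\times r')\cdot(r'\times(\alpha-r-r'))$, costing $rr'(\alpha-r-r')\le\alpha r r'$. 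Summing, the total is at most $\alpha r(\alpha-r)+\alpha(\alpha-r)r'+\alpha r r' = \alpha(\alpha-r)(r+r')+\alpha r r'$. Using $r+r'\le\alpha$ and $\alpha-r\le\alpha$ one gets $\le\alpha^2(\alpha-r)$ for the first term when $r+r'\le\alpha-r$ is false... so the cleanest route is: $\alpha(\alpha-r)(r+r')+\alpha r r'\le \alpha(\alpha-r)(r+r')+\alpha(\alpha-r-r')r'$? That is not quite it either; the honest bound is $\alpha(\alpha - r)(r+r') + \alpha r r' \le \alpha\cdot\alpha\cdot(r+r') \le \alpha^3$ using $\alpha-r\le\alpha$, $r\le\alpha$ is too lossy. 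The right way: set $s=r+r'\le\alpha$; the first term is $\alpha(\alpha-r)s$ and since $\alpha-r=\alpha-(s-r')\le\alpha$ this is $\le\alpha^2 s\le\alpha^3$ only if we drop $rr'$; but $\alpha r r'\le\alpha(s^2/4)$ and adding gives more than $\alpha^3$ a priori. So the nontrivial point is to combine the \textsc{ech} cost with the second \textsc{mad} more carefully: $\alpha(\alpha-r)r' + rr'(\alpha-r-r') = r'\big(\alpha(\alpha-r)+r(\alpha-r-r')\big)=r'\big(\alpha^2-\alpha r + r\alpha - r^2 - r r'\big)=r'(\alpha^2-r^2-rr')\le r'\alpha^2$, then add the first \textsc{mad}'s $\alpha r(\alpha-r)\le\alpha r(\alpha-r)$ and note $\alpha r(\alpha-r)+r'\alpha^2 = \alpha\big(r(\alpha-r)+r'\alpha\big)=\alpha\big(r\alpha-r^2+r'\alpha\big)\le\alpha\big(\alpha(r+r')\big)\le\alpha^3$. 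The degenerate branch $i=1$ is a single \textsc{ech} of an $\alpha\times\alpha$ matrix, cost $\le\alpha^3$. This proves the \textsc{ClearDown} bound.

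For \textsc{UpdateRow}: the nonzero-cost lines are $(1)$, $(3)$, $(4)$ and $(6)$. Line $(1)$ is $Z:=\C+\A.{\rm A}\times\B$ of shape $(\alpha\times r)\cdot(r\times\alpha)$, cost $\alpha^2 r$. Line $(3)$ is $X:=\A.{\rm M}\times V$ of shape $(r'\times r')\cdot(r'\times\alpha)$, cost $\alpha r'^2$. Line $(4)$ is $S:=\B+\A.{\rm E}\times X$ of shape $(r\times r')\cdot(r'\times\alpha)$, cost $\alpha r r'$. Line $(6)$ is $\C:=W+\A.{\rm K}\times V$ of shape $((\alpha-r')\times r')\cdot(r'\times\alpha)$, cost $\alpha(\alpha-r')r'$. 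Total: $\alpha\big(r^2\cdot\tfrac1{r}\,? \big)$ — rather $\alpha\big(\alpha r + r'^2 + rr' + (\alpha-r')r'\big)=\alpha\big(\alpha r + r'^2 + rr' + \alpha r' - r'^2\big)=\alpha\big(\alpha r+\alpha r' + rr'\big)=\alpha\big(\alpha(r+r')+rr'\big)$. Now with $r+r'\le\alpha$: $\alpha(r+r')\le\alpha^2$ and $rr'\le(r+r')^2/4\le\alpha^2/4$, so the total is $\le\alpha(\alpha^2+\alpha^2/4)=1.25\,\alpha^3$, which is exactly the claimed bound. I would remark that the $i=1$ variant only drops some of these lines and so is still bounded by $1.25\alpha^3$.

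I expect the only real obstacle to be bookkeeping: making sure the matrix shapes I read off the pseudocode of \textsc{ClearDown}/\textsc{UpdateRow} are correct (in particular that $\B$ has $r$ rows on input and $\A.{\rm K}$ has $\alpha-r'$ rows), and then doing the algebra in the one order that yields $\alpha^3$ and $1.25\alpha^3$ cleanly rather than a weaker constant — the identity $r^2 + (\alpha-r)\alpha = \alpha r - r^2 + \ldots$ style cancellations are what make the $\textsc{ech}$-plus-\textsc{mad} sum collapse and what make the $1.25$ (rather than, say, $2$) come out. Once the shapes are pinned down, everything is a routine inequality using $r\ge0$, $r'\ge0$, $r+r'\le\alpha$, and $rr'\le(r+r')^2/4$.
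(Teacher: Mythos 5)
Your proposal is correct and takes essentially the same route as the paper: the same job-by-job cost tally with the same matrix shapes (first \textsc{mad} $\alpha r(\alpha-r)$, \textsc{ech} $\alpha(\alpha-r)r'$, second \textsc{mad} $rr'(\alpha-r-r')$ for \textsc{ClearDown}, and lines $(1),(3),(4),(6)$ summing to $\alpha\bigl(\alpha(r+r')+rr'\bigr)$ for \textsc{UpdateRow}), followed by the same inequalities $r+r'\le\alpha$ and $rr'\le(r+r')^2/4$ to get $\alpha^3$ and $1.25\alpha^3$. The only difference is cosmetic: you group the \textsc{ech} and second \textsc{mad} terms in two steps where the paper collapses everything into one identity of the form $\alpha^2(r+r')$ minus nonnegative terms, so apart from tidying the exploratory detours in your \textsc{ClearDown} paragraph nothing needs to change.
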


\begin{proof}
We start analysing the task {\sc ClearDown}: 
For $i=1$ only one job $\ech $ is performed contributing cost $\alpha^3$. 
Otherwise the first call of $\mad $ multiplies a matrix ${\bf A}.A$ of size 
$\alpha \times r $ with a matrix ${\bf D}.R $ of size $r \times (\alpha - r )$, 
contributing cost $\alpha r (\alpha - r)$. 
The echelonisation is done on an $\alpha \times (\alpha -r) $ matrix of rank $r'$ and the second $\mad $ multiplies an $r\times r'$ matrix by an 
$r' \times (\alpha -r -r') $ matrix. 
So in total the cost of {\sc ClearDown} is 
$$\alpha r (\alpha -r) + \alpha (\alpha -r) r' + rr'(\alpha - r -r' ) 
= \alpha ^2 (r+r') - rr'(\alpha + r + r' ) \leq \alpha ^3 $$ 
as $r+r' \leq \alpha $. 
\\
For the task {\sc UpdateRow} we similarly obtain the cost
$\alpha r \alpha $ for the $\mad $ in row (1), 
$r' r' \alpha $  for the $\mul $ in row (3), 
$r r' \alpha $ for the $\mad $ in row (4), and 
$(\alpha - r') r' \alpha $ for the $\mad $ in row (6). 
Summing up we obtain 
$$\alpha ^2 (r+r') + \alpha r r' \leq 1.25 \alpha^3 $$
again since $r+r' \leq \alpha $. 
\end{proof}

Ignoring all tasks of cost 0 Step~1 only involves the tasks {\sc ClearDown} and {\sc UpdateRow}.
The graph of task dependencies decomposes naturally into layers
according to the  value of $i+j$. 
We  abbreviate the call  $\CC(\C^{j}_{ij},\D^{i-1}_{j},i) $
with data  packages depending  on $i,j$ by  $\DA (i,j)$  and similarly
$\UR (\A_{ij}, \C^{j}_{ik},\B^{i-1}_{jk},i)$
by $\RA(i,j,k)$.
Then Figure~\ref{depgraph} displays the local task dependencies in
Step~1 for layers $i+j-1$ and $i+j$,  
where $k=j+1,\ldots , b$.\\

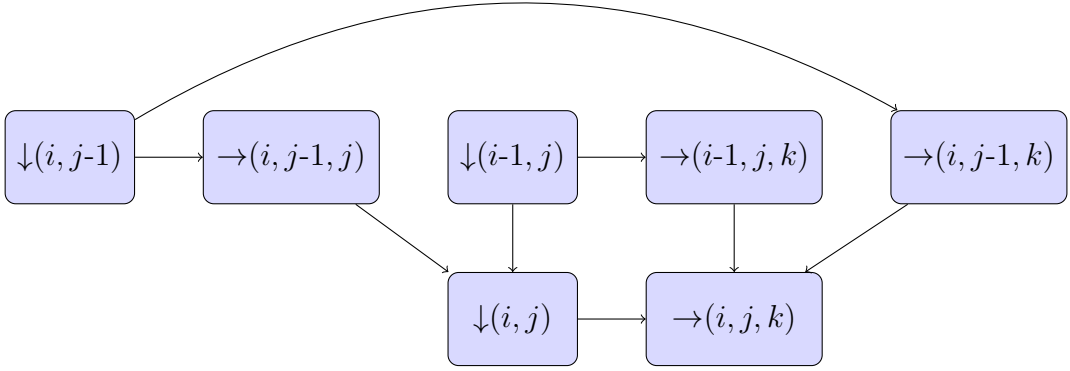
\begin{figure}
\begin{tikzpicture}[
  node distance = 0.9cm,
  auto,
  block/.style={rectangle, draw, fill=blue!15, 
    text width=5em, align=center, rounded corners, minimum height=3em},
  blockk/.style={rectangle, draw, fill=blue!15, 
    text width=3.5em, align=center, rounded corners, minimum height=3em},
  line/.style={draw, -latex'},
  cloud/.style={draw, ellipse,fill=red!20, node distance=3cm,
    minimum height=2em},
  ]
   \node [blockk] (init) {$\DA (i\mbox{-}1,j)$};
  
  \node [block, left=of init]   (URtilde1) {$\RA (i,j\mbox{-}1,j)$};
 \node [blockk, left=of URtilde1]   (URtilde2) {$\DA (i,j\mbox{-}1)$};
 \node [block, right=of init]   (UR1)  {$\RA (i\mbox{-}1,j,k)$};
  \node [block, right=of UR1]   (UR2) {$\RA (i,j\mbox{-}1,k)$};
  \node [blockk, below=of init]  (init2)  {$\DA (i,j)$};
  \node [block, right=of init2]  (URL2)  {$\RA (i,j,k)$};

  \path [line,->] (init)   -- (init2);
  \path [line,->] (URtilde1)  -- (init2);
  \path [line,->] (URtilde2)  -- (URtilde1);
  \path [bend left = 30,->] (URtilde2) edge (UR2);
  \path [line,->] (init)   -- (UR1);
  \path [line,->] (UR1)   -- (URL2);
  \path [line,->] (init2)  -- (URL2);
  \path [line,->] (UR2)  -- (URL2);
\end{tikzpicture}
\caption{Extract of task dependency graph}
\label{depgraph}
\end{figure}

Recall that a critical path in a task dependency graph is
the longest directed path between any pair of start node  and finish node. 
Its length is weighted by the cost of the nodes along the path. 

\begin{proposition} \label{length}
  \begin{enumerate}
    \item
The weighted length of a critical path in the task dependency graph of 
Step~1 is $2.25 \alpha^3(a+b-1)$.
    \item
The weighted length of a critical path in Step~3 is $\max ( (b-1) \alpha^3
, a \alpha^3 ) $.
  \end{enumerate}
\end{proposition}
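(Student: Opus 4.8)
The plan is to analyze the two steps separately by identifying the critical paths in their task dependency graphs and summing the node costs along them, using Lemma~\ref{closecosts} for the individual costs.

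\textbf{Part (1): Step~1.} First I would observe, as noted before the statement, that the dependency graph of Step~1 stratifies into layers indexed by $i+j$: every edge in Figure~\ref{depgraph} goes from a node with a given value of $i+j$ to one with value $i+j$ (the $\RA(i,j,k)$ nodes) or $i+j+1$ (the $\DA(i,j)$ node). The values of $i+j$ range from $2$ (for $\DA(1,1)$) to $a+b$ (reached by the $\RA(a,b-1,b)$-type nodes or by $\DA(a,b)$), so any directed path visits at most $a+b-1$ layers beyond the source. Within the picture, a maximal path alternates $\DA(i-1,j)\to\DA(i,j)$ steps and $\DA(i,j)\to\RA(i,j,k)$ steps, or chains $\DA(i,j-1)\to\RA(i,j-1,k)$ then down into the next $\DA$. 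I would argue that the worst case picks up, at each increment of $i+j$, the cost of one $\CC$ task followed by one $\UR$ task — e.g. the path $\dots\to\DA(i,j)\to\RA(i,j,k)\to\DA(i,j+1)$ is blocked, but $\DA(i,j)\to\DA(i+1,j)$ combined with a trailing $\RA$ at the end works. Concretely, a critical path alternates between $\DA$ and $\RA$ nodes so that between consecutive layers we accumulate cost at most $\alpha^3$ (one $\CC$, by Lemma~\ref{closecosts}) plus at most $1.25\alpha^3$ (one $\UR$), i.e. $2.25\alpha^3$ per layer-increment. Since there are $a+b-1$ such increments (from layer $2$ to layer $a+b$, inclusive of the endpoints appropriately counted), the weighted length is $2.25\alpha^3(a+b-1)$. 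I would also exhibit an explicit path of exactly this length — for instance go down the first block column via $\DA(1,1),\DA(2,1),\dots,\DA(a,1)$ interleaved with the $\RA(i,1,k)$ tasks they feed, then across — to show the bound is attained.

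\textbf{Part (2): Step~3.} Here I would read off the loop structure of {\sc Step 3} in {\sc The Chief}. The outer loop runs $k$ from $b$ down to $1$; for each $k$, the inner loop over $j<k$ calls $\PCU$ (cost $0$), then two inner loops: one over $\ell$ from $k$ to $b$ performing $\CU$ on the $\R$ blocks, and one over $h$ from $1$ to $a$ performing $\CU$ on the $\M$ blocks. Each $\CU$ is a single $\mad$ on $\alpha\times\alpha$ blocks, costing $\alpha^3$; {\sc PreClearUp} and {\sc Copy} cost $0$. The dependency structure is that $\R^{\ell-k+1}_{j\ell}$ depends on $\R^{\ell-k}_{k\ell}$, so a chain propagates along decreasing $k$: the longest such chain through the $\R$-updates has length $b-1$ (as $j$ ranges over $1,\dots,k-1$ and we cascade $k=b,b-1,\dots,2$), giving weighted length $(b-1)\alpha^3$; similarly the longest chain through the $\M$-updates, where $\M^{a+h}_{jh}$ depends on $\M^{a+h-1}_{jh}$ and on $\M^{a+h-1}_{kh}$, runs through $a$ successive $\CU$ calls, giving $a\alpha^3$. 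These two families of chains are essentially independent, so the critical path length is $\max((b-1)\alpha^3, a\alpha^3)$.

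\textbf{Main obstacle.} The routine part is pricing the nodes via Lemma~\ref{closecosts} and bounding the number of layers; the delicate part of Part~(1) is being precise about \emph{which} nodes a critical path actually hits, since $\DA$ and $\RA$ nodes have different costs and the ``$k$'' index means each $\DA(i,j)$ spawns many parallel $\RA(i,j,k)$ leaves that do \emph{not} chain onward (a leaf $\RA(i,j,k)$ only feeds the single $\RA(i,j+1,k)$, and following that route is blocked by the edge pattern shown). So I must verify that the extremal path genuinely alternates one $\CC$ with one $\UR$ per layer-increment and cannot be forced to take two $\UR$'s or skip a layer — this is the crux, and it is exactly what Figure~\ref{depgraph} is designed to make checkable. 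For Part~(2) the only subtlety is confirming that the $\R$-chains and $\M$-chains share only zero-cost predecessors ($\PCU$), so their lengths do not add.
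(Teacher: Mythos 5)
Your overall route is the same as the paper's: for Step~1 you use the stratification of the dependency graph into the $a+b-1$ layers indexed by $i+j$ and charge each layer one $\CC$ (cost $\alpha^3$) plus one $\UR$ (cost $1.25\alpha^3$) via Lemma~\ref{closecosts}, and for Step~3 you count the chain of $\CU$ updates on the $\R$ blocks (longest at $\R_{1b}$, length $b-1$) and on the $\M$ blocks (length $a$), observe they share only zero-cost predecessors, and take the maximum. This is exactly the paper's argument, and your Part~(2) is at the same level of detail as the paper's.

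The one concrete defect is your proposed witness path in Part~(1). The path ``down the first block column via $\DA(1,1),\DA(2,1),\dots,\DA(a,1)$ interleaved with the $\RA(i,1,k)$ tasks they feed'' is not a directed path: the only edge into $\DA(i+1,1)$ is $\DA(i,1)\to\DA(i+1,1)$ (for $j=1$ there is no incoming $\RA$ edge, since $\DA(i,j)$ receives $\RA(i,j-1,j)$ only for $j>1$), so the $\RA(i,1,k)$ nodes are leaves off that chain and cannot be interleaved into it; going down a block column collects only $\CC$ costs. A path can collect both a $\CC$ and a $\UR$ in a layer only through the pattern $\DA(i,j)\to\RA(i,j,j+1)\to\DA(i,j+1)$, i.e.\ while moving rightwards along a fixed block row, and no single directed path realises this in all $a+b-1$ layers (downward moves contribute only $\alpha^3$ per layer). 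So your argument really establishes the upper bound $2.25\alpha^3(a+b-1)$; the attainment claim as you state it fails. The paper does not exhibit an extremal path either --- it simply asserts that within each layer the critical path meets $\CC$ and $\UR$ ``exactly once'' --- and since the per-task costs from Lemma~\ref{closecosts} are themselves worst-case bounds, the estimate is used in the sequel only as an upper bound on the critical path (which is the safe direction for the concurrency claims in Proposition~\ref{prop:concur} and Theorem~\ref{the:concur}). If you want equality you must either drop the explicit path or argue it differently; otherwise your accounting matches the paper's.
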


\begin{proof}
 1.)  The task dependency graph splits naturally into 
$a+b-1$ layers according to the value of $i+j \in \{ 2,\ldots , a+b \} $. 
Within each layer, the critical paths involve 
{\sc ClearDown} and  {\sc UpdateRow} exactly once. 
So in total the length of a critical path is $(a+b-1) (\alpha^3 + 1.25 \alpha^3 ) $. 
\\
2.)
Step~3 only involves the task {\sc ClearUp} of non-zero cost $\alpha^3 $. 
The data package ${\bf R}_{jh} $ is only changed by the data packages below in
the same column so $h-j$ times. The maximum of 
$h-j$ is achieved at ${\bf R}_{1b}$  contributing the  cost  $\alpha^3 (b-1)$. 
For the transformation matrix, which can be done independently, 
 each of the ${\bf M}_{jh}$ is touched 
$a$ times contributing $a \alpha^3 $. 
\end{proof}

To determine the average degree of concurrency we divide the cost of the sequential 
{\sc Gauss}algorithm (with transformation matrix) applied to the $m\times n$-matrix $\CC$ 
by the weighted length of a critical path. For simplicity we assume that $m=n$, $a=b$ and that all blocks are of the same size $\alpha \times \beta $ with
$\alpha = \beta $.

\begin{proposition}  \label{prop:concur}
  Under the assumptions above   and using Lemma \ref{closecosts} 
	the average degree of concurrency of {\sc the Chief} is 
$\frac{1}{5.5} a^{2} $.
\end{proposition}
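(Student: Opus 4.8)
Proof proposal.

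The plan is to compute the numerator and denominator of the ratio separately and then simplify. The denominator is already available: by Proposition~\ref{length}(1) the weighted length of a critical path in Step~1 is $2.25\,\alpha^3(a+b-1)$, and under the assumption $a=b$ this is $2.25\,\alpha^3(2a-1)$; the Step~3 critical path, by Proposition~\ref{length}(2), has weighted length $\max((b-1)\alpha^3, a\alpha^3) = a\alpha^3$ when $a=b$, which is of lower order in $a$ and contributes only a lower-order correction, so asymptotically the critical-path length is dominated by $2.25\,\alpha^3(2a-1)\sim 4.5\,a\,\alpha^3$. (Step~2 has cost $0$ by the conventions fixed just before Lemma~\ref{closecosts}.)

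For the numerator I would recall the standard fact, cited in the introduction via \cite[Theorems~28.7,~28.8]{cormen} and \cite[Theorem~16.12]{Buerg}, that computing a reduced row echelon form together with the transformation matrix of an $n\times n$ matrix costs, up to constants, the same as one $n\times n$ matrix multiplication. With the normalization adopted in this section — a multiplication of an $(\alpha\times\beta)\cdot(\beta\times\gamma)$ pair (possibly with an addition) costs $\alpha\beta\gamma$, and echelonising an $\alpha\times\beta$ matrix of rank $r$ costs $\alpha\beta r$ — the cost of the sequential {\sc Gauss} algorithm on the $n\times n$ matrix $\mathcal{C}$ is, up to a constant factor, $n^3$. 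Since $n = a\alpha$, this is (a constant times) $a^3\alpha^3$. I would pin down the precise constant by tracking the recursive inversion-to-multiplication reduction mentioned in the introduction (inversion of a $2n\times 2n$ matrix via $6$ multiplications of $n\times n$ matrices plus two $n\times n$ inversions), but for the purpose of matching the stated $\tfrac{1}{5.5}a^2$ the relevant bookkeeping is that the sequential cost is taken to be exactly $n^3 = a^3\alpha^3$ under these homogeneous cost conventions.

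Dividing, the average degree of concurrency is
\[
\frac{a^3\alpha^3}{2.25\,\alpha^3(2a-1) + a\alpha^3}
= \frac{a^3}{2.25(2a-1)+a}
= \frac{a^3}{5.5a - 2.25}
\]
which is asymptotically $\tfrac{1}{5.5}a^2$; absorbing the lower-order terms (the $-2.25$ in the denominator and the $a\alpha^3$ Step~3 contribution) into the asymptotic statement gives exactly $\frac{1}{5.5}a^2$. The one genuine subtlety — the step I expect to be the main obstacle — is justifying that the sequential cost is $a^3\alpha^3$ with constant $1$: one must argue that the recursive block-echelonisation described in Section~\ref{sub:echmult}, when run sequentially and with the additive cost model in force here, accumulates to exactly $n^3$ (the same leading behaviour as a single multiplication), so that no hidden constant creeps into the numerator. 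Once that is granted, the rest is the elementary arithmetic above, together with the observation that Step~1 dominates Step~3 in $a$ so that the critical path is $2.25\,\alpha^3(2a-1)$ up to lower-order terms.
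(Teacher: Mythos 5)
Your proposal follows essentially the paper's own argument: take the sequential cost to be $n^3=a^3\alpha^3$ under the stated cost conventions, and divide by the critical-path length $(2.25(2a-1)+a)\alpha^3=(5.5a-2.25)\alpha^3\sim 5.5a\alpha^3$ from Proposition~\ref{length}, giving $\frac{1}{5.5}a^2$. Two corrections, though. First, your prose claim that the Step~3 contribution $a\alpha^3$ is ``of lower order in $a$'' and only a lower-order correction is wrong: it is linear in $a$, exactly the same order as the Step~1 term $\sim 4.5\,a\alpha^3$, and dropping it would yield $\frac{1}{4.5}a^2$ rather than the stated $\frac{1}{5.5}a^2$; your displayed computation does include it, which is why the constant comes out right, but the surrounding justification contradicts that computation and should be fixed. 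Second, the ``main obstacle'' you identify—pinning down the constant $1$ in the sequential cost via the recursive inversion-to-multiplication reduction—is a non-issue in this setting: the cost model fixed just before Lemma~\ref{closecosts} assigns cost $\alpha\beta r$ to echelonising an $\alpha\times\beta$ matrix of rank $r$ (transformation data included), so the sequential {\sc Gauss} on the full matrix is charged $n\cdot n\cdot n=n^3$ by convention, which is precisely what the paper uses; no recursion analysis enters the proof.
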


\begin{proof}
	By our assumptions $n=m$ and the cost of the sequential {\sc Gauss}algorithm (with transformation matrix) applied to the huge matrix $\CC $ is 
$n^3 = a^{3 } \alpha^3$.
By Proposition \ref{length}
the weighted length of a critical path in 
the complete algorithm {\sc the Chief} is 
$(2.25 (2a-1) + a) \alpha^3 = (5.5a-2.25) \alpha^3 \sim 5.5 a \alpha^3 $. 
\end{proof}

In practical examples the gain of performance is much better.  This is
partly due to the  fact that we split our matrix  into blocks that fit
into  the computer's  memory; an  echelonisation of  the huge  matrix,
however, would require to permanently read  and write data to the hard
disk.  The  other reason is  that in random  examples the length  of a
critical path  is much shorter.   To make  this precise we  assume, in
addition to  the assumptions  above of starting  with a  square matrix
partitioned into square blocks of  equal size $\alpha$, that our input
matrix  is {\em  well-conditioned},  by  which we  mean  that the  $a$
top-left square  submatrices of the input  matrix of size $j  \alpha $
($j=1,\ldots , a  $) have full rank.  Then,  properly implemented, the
cost of {\sc ClearDown} is $\alpha^3 $,  if it is called for $i=j$ and
$0$ otherwise.  Also in  {\sc Update Row} $r' = 0$ and  so the cost of
{\sc Update Row}  is $\alpha^3 $ (this can be  shown without using the
assumptions  in   Lemma  \ref{closecosts}).   In  particular   in  the
dependency graph above, the weighted length  of a critical path in any
odd  layer  is $\alpha^3  $  and  in an  even  layer,  this length  is
$2\alpha^3 $  (resp. $\alpha^3 $  for the  last layer). In  total this
shows the following

\begin{theorem}\label{the:concur}
For a well-conditioned square matrix, the length of a critical path in
the dependency graph is $\alpha^3 (3a-2)$ and hence the 
average degree of concurrency of {\sc the Chief} is
$\frac{1}{3} a^{2}$.
\end{theorem}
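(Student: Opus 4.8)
The plan is to compute the length of a critical path in the full dependency graph of {\sc the Chief} under the well-conditioned hypothesis, and then divide the total sequential cost $n^3 = a^3\alpha^3$ by this length, exactly as in the proof of Proposition~\ref{prop:concur}. First I would record the two simplifications that the hypothesis buys us. Since the $j\alpha\times j\alpha$ top-left submatrices all have full rank, when {\sc ClearDown} is called for the block $\C_{ij}$ the accumulated pivot set $\D_j^{i-1}.\gamma$ already has size $\min((i-1)\alpha,\ldots)$; concretely the co-rank $r'$ of the echelonised block $H'$ is $0$ whenever $i>j$, so the {\sc ech} job there produces nothing and the task has cost $0$, while for $i=j$ the block has full rank $r'=\alpha$ and {\sc ClearDown} costs $\alpha^3$. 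Likewise in {\sc UpdateRow} we always have $r'=0$ off the diagonal, and a direct inspection of the pseudo-code of Task~$\UR$ (the $\mad$ in row (1) followed by the $\mad$ in row (6), with $\A.\rho'$ selecting all $\alpha$ rows and rows (3)--(5) trivial) gives cost $\alpha^3$ independent of the conditioning assumptions.

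Next I would feed these per-node costs into the layer decomposition already set up before Proposition~\ref{length}: Step~1's dependency graph splits into layers indexed by $i+j\in\{2,\ldots,a+b\}$, i.e.\ $a+b-1$ layers, and since $a=b$ this is $2a-1$ layers. Using Figure~\ref{depgraph}, a critical path passes through one {\sc ClearDown} node $\DA(i,j)$ and one {\sc UpdateRow} node $\RA(i,j,k)$ per layer. In an \emph{odd} layer $i+j$ odd, the diagonal block $i=j$ never occurs (that would force $i+j$ even), so every {\sc ClearDown} on the path has cost $0$ and only the {\sc UpdateRow} contributes $\alpha^3$; in an \emph{even} layer the path meets the one diagonal {\sc ClearDown} of cost $\alpha^3$ together with an {\sc UpdateRow} of cost $\alpha^3$, giving $2\alpha^3$ — except the final layer, which for the purpose of the longest path contributes only $\alpha^3$ (it contains the bottom-right block and no further $\RA$ successor). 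Summing over the $2a-1$ layers of Step~1: there are $a$ odd layers and $a-1$ even layers among the first $2a-1$, but one must track the boundary term carefully; the arithmetic should collapse to $(3a-2)\alpha^3$ once Step~3's contribution is folded in via Proposition~\ref{length}(2), which under well-conditioning gives $\max((b-1)\alpha^3, a\alpha^3)=a\alpha^3$ (the transformation-matrix chain dominating). Concretely I expect Step~1 to contribute $(2a-2)\alpha^3$ and Step~3 to contribute $a\alpha^3$, for a total of $(3a-2)\alpha^3$.

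Finally, the average degree of concurrency is the quotient
$$\frac{a^3\alpha^3}{(3a-2)\alpha^3} = \frac{a^3}{3a-2} \sim \frac{1}{3}a^2,$$
which is the claimed bound.

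The step I expect to be the main obstacle is the careful bookkeeping of the layer sums and the boundary layers — making sure the diagonal {\sc ClearDown} nodes really land only in even layers, that the odd/even layer count is right when $a=b$, that the very last layer is not double-counted, and that the Step~1 and Step~3 critical chains concatenate (rather than overlap or run in parallel) so that their lengths genuinely add. Everything else — the vanishing of $r'$ off the diagonal, the cost $\alpha^3$ of {\sc UpdateRow}, the cost $\alpha^3$ of the diagonal {\sc ClearDown} — is immediate from the hypotheses and the pseudo-code, and the final division is trivial.
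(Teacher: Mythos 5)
Your per-task costs and the layer decomposition are the right ingredients (they are the paper's), but the final bookkeeping contains a genuine error, and you only reach $(3a-2)\alpha^3$ by fiat. First, the parity count is backwards: the layers are $i+j\in\{2,\ldots,2a\}$, so there are $a$ even layers (these contain the diagonal blocks, including the last layer $i+j=2a$) and $a-1$ odd layers. Second, and more seriously, with the per-layer costs you yourself state ($2\alpha^3$ for an even layer, $\alpha^3$ for an odd layer, $\alpha^3$ for the final layer), Step~1 alone already sums to $(a-1)\cdot 2\alpha^3+\alpha^3+(a-1)\cdot\alpha^3=(3a-2)\alpha^3$; this length is realized by the explicit dependency chain $\DA(1,1)\to\RA(1,1,2)\to\RA(2,1,2)\to\DA(2,2)\to\RA(2,2,3)\to\cdots\to\DA(a,a)$, so Step~1's critical path cannot be $(2a-2)\alpha^3$ as you assert. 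This is exactly how the paper obtains the theorem: the count $(3a-2)\alpha^3$ comes from the Step-1 dependency graph of Figure~\ref{depgraph} alone, and no Step-3 chain of length $a\alpha^3$ is folded in. If you do append Step~3 (which indeed cannot start before $\D^{a}_b$ exists, i.e.\ after the last layer of Step~1), the correct Step-1 length plus your $a\alpha^3$ gives $(4a-2)\alpha^3$ and a concurrency of order $a^{2}/4$, which is not the claimed statement. So your derivation hits the stated number only through the miscount you yourself flag as uncertain, and the step you identified as the "main obstacle" is precisely where the argument fails.

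Two smaller points. The cost-$0$ claim for $\CC$ needs the case $i<j$ as well, not only $i>j$: there the block has no rows left, because all $\alpha$ rows of block row $i$ were already selected at the diagonal block. And "UpdateRow costs $\alpha^3$ independent of the conditioning assumptions" is false in general: Lemma~\ref{closecosts} gives only the bound $1.25\alpha^3$; it is the well-conditioning, which forces $(r,r')=(\alpha,0)$ off the diagonal and $(0,\alpha)$ on it, that brings the cost down to $\alpha^3$. Your attribution of that cost to rows $(1)$ and $(6)$ with $\A.\rho'$ selecting all $\alpha$ rows is also internally inconsistent, since $r+r'\le\alpha$; off the diagonal row $(1)$ contributes $\alpha^3$ and rows $(3)$--$(6)$ are free, while on the diagonal row $(3)$ contributes $\alpha^3$ and rows $(1)$, $(4)$, $(6)$ are free.
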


\begin{remark}
The concurrency analysis above is not sufficient to ensure the effective
use of all processors throughout the entire run that we see in the
experimental results below.

Although the steps are estimated at  their worst case in practice many
tasks early  in the task dependency  graph execute a lot
faster. Provided there are sufficiently many blocks, 
enough work is available for  execution considerably earlier than
the task dependency graph might suggest. Assigning a priority $i+j$ to
a  task pertaining  to the  $i$-th row  and $j$-th  column directs  an
appropriate scheduling.
\end{remark}

\section{Experimental results}\label{sec:exper}

We give timings for the two implementations mentioned in
Section~\ref{sec:model}.

\subsection*{The Meataxe}

In order to  demonstrate the power of this algorithm the following
tests were done on   a  machine  with
64-piledriver cores with 512 GB of memory  running at $2.7$ gigaherz.

We chose as our
first  example  a  random  $1,000,000  \times  1,000,000$-matrix  with
entries in  the field  of order  2.  To put  this matrix  into reduced
echelon form  with transformation matrix  we chose to chop  the matrix
into  blocks  of  size  $50,000\times  50,000$. This run
took 520min.

The following graph  shows the progress of the calculation. The red
shows that over 60 cores were used for the vast majority of the
time. The blue shows that during Steps~1  and~2 the memory footprint was
fairly constant but at the transition to Step~3 about 30\% more memory
was needed, due mainly to the expansion of the matrix ${\mathcal M}.$

\begin{center}
\pgfuseimage{rla1}
\end{center}

For a  second example, on the same machine, we echelonised with
transformation matrix a 
random  $600,000  \times  600,000$-matrix  with
entries in  the field  of order  3 in 460 min, using a block size of $30,000.$
We do not give a graph for this run, as it is almost indistinguishable
from the one given above.

The above examples were done with a carefully chosen block size.
The following two examples highlight the effect of too large a block
size. We echelonised the same 
random  $300,000  \times  300,000$-matrix  with entries in the field
of order 3 using block sizes of $30,000$ and $15,000$, respectively.
In the first graph we see that 
$15,000$ is again a good choice of block size. Almost all cores are used
for most of the time. 
In the second graph we see that 
$30,000$ is  too large a block size, so that the 64 available cores
are seldomly utilised at once, leading to a run time which is about
$50$\% longer.
\begin{center}
\pgfuseimage{ch20} 
\pgfuseimage{ch10} 
\end{center}

Note that in general terms the necessary  block size agrees with the
concurrency analysis. 

Our final three examples are intended to
demonstrate that our methods are not restricted
to tiny fields, nor to prime fields.
A random  $200,000  \times  200,000$-matrix  with entries in the field
with $193$ took 445 mins,
a random  $200,000  \times  200,000$-matrix  with entries in the field
with $1331=11^3$ took 615 mins, and
a random  $100,000  \times  100,000$-matrix  with entries in the field
$50653=37^3$ elements took 200 mins.



\section{Acknowledgements}

We thank  Martin Albrecht for discussions  in the early stages  of the
algorithm design.  The first full implementation of this algorithm was
developed by Jendrik  Brachter in {\sf GAP} \cite{GAP} which was
essential to getting the details of the algorithm right.
A  parallel version of this implementation, using
{\sf HPC-GAP},  was produced by Jendrik  Brachter and Sergio
Siccha. 

The second and third author acknowledge the support of the SFB-TR 195.

 \end{document}